\pgfplotsset{compat=1.18}
\newcommand{\unstable}[1]{W^u_{\delta}(#1)}
\newcommand{\stable}[1]{W^s_{\delta}(#1)}
\newcommand{\tope}[1]{\mathrm{h}_{\mathrm{top}}(#1)}
\newcommand{\be}[1]{\hbar^{HW}(#1)}
\newtheorem{definition}{Definition}
\newtheorem{theorem}{Theorem}
\newtheorem{lemma}{Lemma}
\newtheorem{remark}{Remark}
\newtheorem{proposition}{Proposition}
\newtheorem*{customtheorem}{Theorem A} 
\newtheorem*{thmB}{Theorem B}
\title{\textbf{WRAPPED FLOER HOMOLOGY AND HYPERBOLIC SETS}}
\author{Rafael A. Fernandes}
\address{Department of Mathematics, UC Santa Cruz, Santa
  Cruz, CA 95064, USA} 
\email{rfernan9@ucsc.edu}
\begin{document}
\keywords{Periodic orbits, Reeb flows, wrapped Floer homology, topological
  entropy, barcode entropy, persistence modules}

\date{\today} 

\thanks{The work is partially supported by the NSF grant DMS-2304206}

\maketitle

\begin{abstract}
   In this paper, we continue the quest to understand the interplay between wrapped Floer homology barcode and topological entropy. Wrapped Floer homology barcode entropy is defined as the exponential growth, with respect to the left endpoints, of the number of not-too-short bars in its barcode. We prove that, in the presence of a topologically transitive, locally maximal hyperbolic set for the Reeb flow on the boundary of a Liouville domain, the barcode entropy is bounded from below by the topological entropy restricted to the hyperbolic set.
\end{abstract}
\tableofcontents
\section{Introduction and main results}
\subsection{Introduction}

In this paper, we further explore the connections between Floer homology and the dynamics of the underlying system. Our focus is on understanding how the hyperbolic dynamics of a Reeb flow influences barcode entropy. More precisely, we prove that in the presence of a topologically transitive, locally maximal hyperbolic set for the Reeb flow on the boundary of a Liouville domain, the wrapped Floer homology barcode entropy is bounded from below by the topological entropy of the Reeb flow restricted to the hyperbolic set. 

The barcode entropy of a persistence module is an invariant roughly defined as the exponential growth, with respect to the left endpoint, of the number of bars that are not-too-short in its barcode. In the case of a sequence of persistence modules indexed by the integers, the barcode entropy is the exponential growth, with respect to the indexes, of the number of bars that are not-too-short.

In \cite{cineli2021topological}, Çineli, Ginzburg And Gürel defined the notion of barcode entropy $\hbar(\varphi)$ for a Hamiltonian diffeomorphism $\varphi$ using the sequence $HF(\varphi^k)$, and the relative barcode entropy $\hbar(\varphi, L_0,L_1)$, where $L_0$ and $L_1$ are Lagrangians, via the sequence $HF(L_0,\varphi^k(L_1))$. Among other things, they proved that in the presence of a locally maximal hyperbolic set $K$ for $\varphi$, the barcode entropy is bounded below by the topological entropy of $\varphi$ restricted to the hyperbolic set $K$, i.e., $\hbar(\varphi) \geq \tope{\varphi|_K}$, a result they referred to as Theorem B. Later, in \cite{meiwes2024barcode}, Meiwes proved a relative version of Theorem B. Specifically, for a Hamiltonian diffeomorphism $\varphi$ and a pair of Hamiltonian isotopic Lagrangians $(L_0, L_1)$ containing the local stable and local unstable manifolds of two points in $K$, i.e., $W_{\delta}^u(q) \subset L_0$ and $W_{\delta}^s(p) \subset L_1$, for some $q, p \in K$ and $\delta > 0$, it holds that $\hbar(\varphi, L_0, L_1) \geq \tope{\varphi|_K}$. In the context of Reeb flows, specifically those on the boundary of Liouville domains, Fender, Lee, and Sohn defined the notion of barcode entropy in \cite{fender2023barcode} via filtered symplectic homology, and subsequently, in \cite{cineli2024barcode}, Çineli, Ginzburg, Gürel, and Mazzucchelli proved a version of Theorem B in this setting. More precisely, they showed that in the presence of a locally maximal hyperbolic set $K$ for the contact form $\alpha = \lambda|_M$ on the boundary $\Sigma$ of a Liouville domain $(M, \lambda)$, we have $\hbar(\alpha) \geq \tope{K}$, where $\tope{K} := \tope{\varphi^t_{\alpha}|_{K}}$. In this paper, we prove a relative version of Theorem \hyperref[thmB]{B} for the Reeb flow case. 

Notice that Theorem \hyperref[thmB]{B} indicates that the barcode entropy captures the local behavior of the underlying dynamical system, rather than just the global topology of the underlying space. There are several results relating the topological entropy of geodesic flows and the global topology of the underline manifolds, as for example in \cite{dinaburg1971relations,katok1982entropy,paternain2012geodesic}. Analogs of these ideas were brought to the contact case, and similar results were obtained such as in, for example, \cite{abbondandolo2023entropy,alves2016cylindrical,alves2019legendrian,alves2019topological,alves2022c,alves2022reeb,alves2023c,macarini2011positive}. What distinguishes the results presented here from previous works is their independence from the global properties of the underlying system and space. Specifically, our results do not depend on the isotopy class of the map, the exponential growth of the Floer homology, or the topology of the configuration/phase space. For example, the wrapped Floer homology barcode entropy may be positive independently of the growth of $\pi_1(M)$ or $H_*(\Lambda)$, e.g., when $M$ is a sphere or torus and the underling flow admits a hyperbolic set with positive topological entropy.

\subsection{Main results}
Consider a Liouville domain $(M,\lambda)$, and denote $\alpha = \lambda|_{\Sigma}$ the restriction of $\lambda$ to the boundary $\Sigma = \partial M$. Let $L_0$ and $L_1$ be exact asymptotically conical Lagrangians in $M$.
We denote the filtered wrapped Floer homology of $(M,L_0 \rightarrow L_1)$ on the interval $(-\infty,t)$ by $HW^t(M,L_0 \rightarrow L_1)$. Together with the "inclusion maps", the family of vector spaces $t \rightarrow HW^t(M,L_0 \rightarrow L_1)$ form a persistence module in the sense of Section 2 in \cite{fernandes2024barcode}, and we can therefore consider its barcode. 

For $\epsilon>0$, let $\text{\textcrb}_{\epsilon}(M,L_0\rightarrow L_1,t)$ the number of bars with a length greater than or equal to $\epsilon$ and with left end point at most $t$. Note that this number increases with $t$ and $1/\epsilon$. The \textit{wrapped Floer homology barcode entropy} is defined as follows. (Here $\log^+(x) = \log(x)$, for $x > 0$, and $\log^+(0)=0$.)

\begin{definition}
    The wrapped Floer homology \textbf{barcode entropy} of $M$ is defined by 
    $$\hbar^{HW}(M,L_0 \rightarrow L_1) =\lim_{\epsilon \rightarrow 0} \limsup_{t \rightarrow \infty} \frac{\log^{+}\text{\textcrb}_{\epsilon}(M,L_0 \rightarrow L_1,t)}{t},$$
\end{definition}

We point out that this definition makes sense in a broader sense, i.e., one can define the barcode entropy of any persistence module. One could ask whether the barcode entropy is not a trivial invariant (not always zero). From the work of Meiwes in \cite{meiwes2018rabinowitz}, we see that there exist examples where the symplectic growth of wrapped Floer homology, i.e. the exponential growth of the number of infinity bars with respect to the left end point is positive, therefore, so is the barcode entropy. Therefore, the barcode entropy is a non-trivial invariant. Moreover, for a fillable contact manifold, the wrapped Floer homology barcode entropy is independent of the filling; see \cite{fernandes2024barcode} for details.

In what follows, $K$ is a hyperbolic set for $\alpha$, and $W^s_{\delta}(p)$ and $W^u_{\delta}(q)$ denote the local stable and local unstable manifolds of $p,q \in K$ respectively. For a better understanding of the dynamical concepts involved in the following theorem, we refer to \cite{fisher2019hyperbolic,katok1995introduction}. In what follows, we set
$\tope{K} := \tope{\varphi^t_{\alpha}|_{K}}$.

\begin{thmB} \label{thmB}
Let $K$ be a compact, topologically transitive hyperbolic invariant set of the Reeb flow $\varphi^t_{\alpha}$. If $W^s_{\delta}(p) \subset \Lambda_0$ and $W^u_{\delta}(q) \subset \Lambda_1$, for some $q,p \in K$, and $\delta>0$, then
\begin{equation} \label{eq:barcode entropy bounded by below}
\be{M,L_0 \rightarrow L_1} \geq \tope{K}.
\end{equation}
\end{thmB}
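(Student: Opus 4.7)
The plan is to adapt the now-standard strategy of Çineli–Ginzburg–Gürel \cite{cineli2021topological}, implemented in wrapped Floer homology by Çineli–Ginzburg–Gürel–Mazzucchelli \cite{cineli2024barcode} and in the relative Hamiltonian/Lagrangian setting by Meiwes \cite{meiwes2024barcode}, to the present relative wrapped Floer setting. The argument has two essentially independent parts: a dynamical count of Reeb chords from $\Lambda_0$ to $\Lambda_1$ with action $\lesssim T$, and a Floer-theoretic translation of this count into a lower bound for the number of sufficiently long bars in the barcode of $HW^t(M, L_0 \rightarrow L_1)$.

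For the dynamical count, Katok's horseshoe theorem for flows produces, for every $\eta > 0$, a basic hyperbolic subset $K_\eta \subset K$ conjugate to a suspension of a subshift of finite type with $\tope{K_\eta} > \tope{K} - \eta$. Topological transitivity implies that $W^u(q)$ and $W^s(p)$ both accumulate on $K_\eta$, so the inclination ($\lambda$-) lemma together with the shadowing property yields, for every $T > T_0(\eta)$, at least $e^{(\tope{K} - \eta)T}$ distinct Reeb-flow trajectory segments that start in $\stable{p}$, end in $\unstable{q}$, and have time length in $[T, T + C]$ for some $C = C(\eta, K)$. Under the hypothesis $\stable{p} \subset \Lambda_0$ and $\unstable{q} \subset \Lambda_1$, each is a Reeb chord from $\Lambda_0$ to $\Lambda_1$. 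After a small generic perturbation of $\alpha$ (and if necessary of $L_0, L_1$) supported away from $K \cup \stable{p} \cup \unstable{q}$, each such chord becomes non-degenerate and defines a generator of the filtered wrapped Floer complex $CW^t(L_0, L_1)$ at action level close to $T$; the hyperbolic set is preserved because hyperbolic invariant sets are robust under $C^1$-small perturbations.

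The Floer-theoretic step rests on a \emph{crossing energy lemma} in the wrapped setting: there exists a uniform $\epsilon_0 = \epsilon_0(K, \delta) > 0$ such that any Floer strip connecting two distinct chord generators produced in the dynamical step has energy at least $\epsilon_0$. I would prove this along the lines of the analogous lemmas in \cite{cineli2021topological, cineli2024barcode, meiwes2024barcode}: assuming a sequence of strips with vanishing energy, SFT-type compactness extracts a limit whose boundary traces a Reeb orbit shadowing $K$, and hyperbolicity of $K$ together with the asymptotic behavior near $\stable{p}$ and $\unstable{q}$ forces the two asymptotic chords to coincide, a contradiction. Given $\epsilon_0$, a standard persistence-module argument (cf.\ \cite{cineli2024barcode}) implies that a definite fraction of the chord generators of action at most $T + C$ give rise to bars in the barcode of $HW^t(M, L_0 \rightarrow L_1)$ of length at least $\epsilon_0$ and left endpoint at most $T + C$. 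Combining with the dynamical count gives $\text{\textcrb}_{\epsilon_0}(M, L_0 \rightarrow L_1, T + C) \gtrsim e^{(\tope{K} - \eta)T}$, so $\be{M, L_0 \rightarrow L_1} \geq \tope{K} - \eta$, and letting $\eta \to 0$ yields (\ref{eq:barcode entropy bounded by below}).

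The hardest part will be establishing the crossing energy lemma in this setting. Compared with the closed Hamiltonian case of \cite{cineli2021topological}, one must now work with holomorphic strips in the completed Liouville domain $\widehat{M}$ whose Lagrangian boundaries are the asymptotically conical Lagrangians $L_0, L_1$ with ends running off to infinity along $\Lambda_0, \Lambda_1$. A neck-stretching compactness argument must exclude the formation of holomorphic disks or punctured strips in the symplectization that could absorb all the energy, and one must obtain $C^0$-control of the strips in a neighborhood of $K$ using the hyperbolicity of the Reeb flow and the local structure of $\stable{p} \subset \Lambda_0$ and $\unstable{q} \subset \Lambda_1$. A secondary, more combinatorial, difficulty is ensuring that the exponential family of chords produced in Step 1 is \emph{pairwise well-separated} in the phase space, uniformly in $T$, so that the crossing energy lemma actually delivers one long bar per chord rather than merging many chords into a single persistence class.
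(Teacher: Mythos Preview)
Your overall architecture---a dynamical count of hyperbolic Reeb chords, a uniform crossing-energy bound, and a persistence-module conversion into long bars---is exactly the paper's. The implementations diverge at both technical steps, and at the second one your outline has a real gap.

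For the count, the paper does not pass through Katok's horseshoe or the $\lambda$-lemma. It applies the Specification Theorem directly to $K$ (local maximality is a standing hypothesis here) to get $\limsup_\tau \tau^{-1}\log N(q,p,\tau,\delta)=\tope{K}$; see Proposition~\ref{prop: top ent coincide growth of per orb}. This avoids the detour through an auxiliary basic set $K_\eta$ and the accumulation argument for $W^{u}(q),W^{s}(p)$.

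The substantive issue is the crossing-energy step. You propose an SFT/neck-stretching compactness argument, but the difficulty you yourself flag---uniformity as the chord length $T\to\infty$---is precisely what such an argument does not give: the strips live at action levels tending to infinity and there is no fixed moduli space to compactify. The papers you cite do \emph{not} prove crossing energy this way either. The present paper (following \cite{cineli2023invariant,cineli2024barcode}) fixes a convex semi-admissible Hamiltonian $H$, works with $H^{\sharp\tau}$, and proves a \emph{Location Constraint Theorem} (Theorem~\ref{thm: location constraint}): any Floer strip of $H^{\sharp\tau}$ with small energy and one end in a fixed shell $[r_*^-,r_*^+]\times\Sigma$ stays in a slightly larger shell, with constants independent of $\tau$. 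The proof uses an explicit differential inequality for $\int_0^\tau r(u_s)\,dt$ together with the $\tau$-uniform pointwise bound $\|\partial_s u\|\le C\,E(u)^{1/4}$. Once trapped in a thin shell, the $\Sigma$-projection of the strip is an $\eta$-pseudo-orbit of the Reeb flow with $\eta$ controlled by $E(u)$, and shadowing on the locally maximal set $K$ forces the two asymptotic chords to coincide (Theorem~\ref{thm: crossing energy theorem}). Two further differences worth noting: the paper never perturbs $\alpha$ (which would move the Reeb flow and could push $W^{s/u}_\delta$ off $\Lambda_i$) but instead perturbs the Hamiltonian $\tau H$ after $\tau$ is fixed; and the passage to the wrapped barcode goes through the sequential identity $\hbar^{HW}(H)=\hbar^{HW}(M)$ (Proposition~\ref{sequecialbarcodeentropy}) together with Proposition~\ref{prop: bar in a isolated set}, rather than working directly with $HW^t(M,L_0\to L_1)$.
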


One way to understand the above theorem is that the barcode entropy on the boundary of a Liouville domain captures the hyperbolic dynamics of the Reeb flow. 

Theorem \hyperref[thmB]{B} can be thought as the relative version of Theorem B in \cite{ginzburg2014hyperbolic}, and also as the natural follow-up question after the proof Theorem A, stated below by the sake of completeness, and proved in \cite{fernandes2024barcode}, towards understanding the wrapped Floer homology barcode entropy. 

\begin{customtheorem}[\cite{fernandes2024barcode}] \label{theoremA} Let $(M,\lambda)$ be a Liouville domain, and $L_0$, and $L_1$ be connected exact asymptotically conical Lagrangians in $M$. Then
\begin{align} 
    \label{equationentropies}
    \hbar^{HW}(M,L_0 \rightarrow L_1) \leq h_{top}(\alpha). 
\end{align}
\end{customtheorem}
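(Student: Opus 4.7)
The plan is to bound the number of bars in the barcode of the persistence module $\{HW^s(M,L_0\to L_1)\}_s$ by the number of generators of an action-filtered Floer complex computing it, translate that count into a count of Reeb chords of $\alpha$ from $\Lambda_0$ to $\Lambda_1$, and finally bound the exponential growth rate of this chord count by $\tope{\alpha}$ via a Yomdin--Newhouse volume-growth estimate. This is the standard route \emph{bars $\to$ generators $\to$ chords $\to$ entropy}.

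Step 1. For every slope $\tau>0$, choose an admissible Hamiltonian $H_\tau$ that is linear with slope $\tau$ on the cylindrical end, is nondegenerate for the pair $(L_0,L_1)$, and whose action-filtered Floer complex $CF^s(H_\tau,L_0,L_1)$ computes $HW^s(M,L_0\to L_1)$ for all $s<\tau$. By the normal-form theorem for persistence modules (every bar is born at a generator of the underlying chain complex),
\[
\text{\textcrb}_\epsilon(M,L_0\to L_1,t)\ \leq\ \dim CF^t(H_\tau,L_0,L_1)
\]
for every $\epsilon>0$ and every $\tau>t$.

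Step 2. After a $C^2$-small perturbation making all chords nondegenerate, generators of $CF^t(H_\tau,L_0,L_1)$ split into (i) a finite, $t$-independent collection of chords lying in the interior of $M$, and (ii) Hamiltonian chords on the conical end, which are in natural bijection with Reeb chords of $\alpha$ from $\Lambda_0$ to $\Lambda_1$ of period at most $t$. Writing $N_t(\alpha)$ for the latter count, this gives
\[
\dim CF^t(H_\tau,L_0,L_1)\ \leq\ C_0+N_t(\alpha).
\]
Step 3. Each Reeb chord of period $s$ corresponds to an intersection point of $\varphi^s_\alpha(\Lambda_1)$ with $\Lambda_0$ inside the compact contact manifold $\Sigma$. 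By Yomdin's theorem---extended to submanifolds of positive codimension by Newhouse---the exponential growth rate of the Riemannian volume of $\varphi^s_\alpha(\Lambda_1)$, and hence (after a transversality perturbation) of its intersection with the fixed submanifold $\Lambda_0$, is bounded above by $\tope{\varphi^1_\alpha}=\tope{\alpha}$. Integrating over the interval $(0,t]$ yields
\[
\limsup_{t\to\infty}\frac{1}{t}\log N_t(\alpha)\ \leq\ \tope{\alpha}.
\]
Combining Steps 1--3 and then letting $\epsilon\to 0$ (which does not affect the right-hand side, as the bound is uniform in $\epsilon$) produces $\be{M,L_0\to L_1}\leq \tope{\alpha}$.

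The main obstacle is the non-compactness of $L_0,L_1$: they are only asymptotically conical, so one must show via an integrated maximum principle and the cylindrical structure of $H_\tau$ that all Floer trajectories remain in a compact region, so that the chain-level identification in Step 1 is valid and the chord-counting bijection in Step 2 is complete with no contribution escaping to infinity. A second delicate point is that Yomdin's theorem produces smooth volume growth estimates, and passing from these to a discrete transverse intersection count requires a careful perturbation of $\Lambda_0,\Lambda_1$ that preserves the conical structure so as not to invalidate the Floer-theoretic setup. Reconciling these two requirements---perturbation at infinity for transversality on $\Sigma$, and confinement for Floer compactness on $M$---while maintaining the precise action filtration across every identification, is the technical crux.
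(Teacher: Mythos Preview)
The paper does not contain a proof of Theorem~A; it is merely stated for context and attributed to \cite{fernandes2024barcode}. So there is no in-paper proof to compare against. That said, your overall architecture \emph{bars $\to$ generators $\to$ Reeb chords $\to$ entropy} is the standard one and matches what is carried out in the cited reference and its predecessors (e.g.\ \cite{cineli2021topological}).

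There is, however, a genuine gap in Step~3. Yomdin's theorem bounds the exponential growth rate of the \emph{volume} of $\varphi^s_\alpha(\Lambda_1)$ by $\tope{\alpha}$, but volume growth does not by itself control the number of transverse intersections with a fixed submanifold $\Lambda_0$. The phrase ``and hence (after a transversality perturbation) of its intersection with the fixed submanifold $\Lambda_0$'' hides the entire difficulty: two half-dimensional submanifolds of bounded volume can intersect in arbitrarily many points. The mechanism that bridges this in \cite{fernandes2024barcode} (following \cite{cineli2021topological}) is a \emph{Legendrian tomograph}: one constructs a smooth family $\{\Lambda_0^b\}_{b\in B}$ of nearby Legendrians parametrized by a ball $B$ of the correct dimension, so that the evaluation map is a submersion, and then a Crofton-type integral-geometric inequality gives
\[
\int_B \#\bigl(\Lambda_0^b \cap \varphi^s_\alpha(\Lambda_1)\bigr)\,db \;\leq\; C\cdot \mathrm{vol}\bigl(\varphi^s_\alpha(\Lambda_1)\bigr).
\]
One then uses that $\text{\textcrb}_\epsilon$ is Lipschitz in the Lagrangian data (so the barcode entropy is the same for every $b$) to pass from the averaged intersection count back to the barcode count for the original pair. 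Without this tomograph step, the argument does not close.

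A secondary imprecision: in Step~1 the identification of $HW^s(M,L_0\to L_1)$ with $HW^s(H_\tau,L_0\to L_1)$ for $s<\tau$ is not literally true for an arbitrary admissible $H_\tau$ of slope $\tau$; the action of the Hamiltonian chord over a Reeb chord of period $T$ is governed by the reparametrization function $A_h$, not by $T$ itself. One has to either work with a carefully chosen cofinal family (as in Proposition~\ref{sequecialbarcodeentropy}) or track the distortion $A_h$ introduces, which can be made arbitrarily close to the identity but is never equal to it for a convex $h$.
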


From the above theorem we observe that the barcode entropy, a Floer-theoretic invariant, is bounded from above by the topological entropy, a topological invariant.

Regarding the assumptions of the Theorem \hyperref[thmB]{B}, It is not hard to see that given any point in a hyperbolic set $K$ for $\alpha$, we can always find a Legendrian that locally coincides with the local stable or unstable manifold of that point. Indeed, we can always move the Legendrians making them pass through a point, and modify them locally. 

The proof of Theorem \hyperref[thmB]{B} relies heavily on a relative, parametric version of the \textit{Crossing Energy Theorem}. In this context, it states that for a convex semi-admissible Hamiltonian $H$, any small energy Floer strip for $(H,L_0 \rightarrow L_1)$, where $L_0$ and $L_1$ are exact asymptotically conical Lagrangians, asymptotic to Hamiltonian chords at both ends, one of which is a $(q,p,\delta)$-chord, with $q,p \in K$, and $W^s_{\delta}(p) \subset \Lambda_0 = \partial L_0$ and $W^u_{\delta}(q) \subset \Lambda_1 = \partial L_1$, has energy bounded from below by some $\sigma>0$. Various versions of this theorem have been established in \cite{ginzburg2014hyperbolic,ginzburg2018hamiltonian,cineli2021topological,meiwes2024barcode,cineli2023invariant,cineli2024barcode}. For Liouville domains, the proof of the Crossing Energy Theorem was made possible by the \textit{Location Constraint Theorem}, first proved in \cite{cineli2023invariant}. For completeness, we present a relative version of the Location Constraint Theorem and use it to derive a relative version of the Crossing Energy Theorem.

We tend to believe that the two invariants do not coincide, as shown by Çineli in the case of Hamiltonian Floer homology \cite{cineli2023generalized}. On the other hand, we expect the two invariants to satisfy a maximal principle, i.e.,
$$\sup_{L_0, L_1}\{\be{M,L_0 \rightarrow L_1}\} = \tope{\alpha},$$
where the supremum is taken over all pairs of connected exact asymptotically conical Lagrangians. \\

This paper is organized as follows. In the first section, we review the construction of filtered wrapped Floer homology and some facts about Floer strips. The second section revisits the definition of wrapped Floer homology barcode entropy and introduces an alternative method for computing it. In the final section, which is divided into three parts, we present the proof of Theorem B. The first part explores the relationship between topological entropy and the exponential growth of periodic orbits. In the second part, we introduce the main ingredient used in the proof of Theorem B and provide its proof. Finally, in the third part, we conclude by proving the intermediate theorem necessary for the proof of Theorem B.

\subsection*{Acknowledgments} 
 \addtocontents{toc}{\protect\setcounter{tocdepth}{-1}} The author is deeply grateful to Viktor Ginzburg for his invaluable guidance throughout this project and to Brayan Ferreira for useful discussions. Part of this work was conducted at the Federal University of Espírito Santo, Brazil, during a research visit in the summer of 2024. We would like to thank the institution for its warm welcome and hospitality.
 \addtocontents{toc}{\protect\setcounter{tocdepth}{2}}

\section{Wrapped Floer homology} In this section, we discuss the basic construction of filtered wrapped Floer homology and recall some facts that will be useful for proving the main theorems of this paper.
\subsection{Exact conical Lagrangians} This subsection provides the definitions of the Lagrangians we consider in wrapped Floer homology, along with some remarks that will be useful. 

Let $(M,\lambda)$ be a Liouville domain, and $(L,\partial L) \subset (M,\partial M)$ a Lagrangian. We call $L$ \emph{asymptotically conical} if 
\begin{itemize}
    \item $\Lambda = \partial L$ is a Legendrian submanifold of $(\Sigma, \xi_{M})$, where $\Sigma = \partial M$, and $\xi_{M}$ is the contact structure induced by $\lambda|_{\Sigma} = \alpha$,
    \item $L \cap [1-\epsilon,1] \times \Sigma = [1-\epsilon,1] \times \Lambda$ for sufficiently small $\epsilon > 0$.
 \end{itemize}
We can extend $L$ to an Lagrangian $\widehat{L} \subset \widehat{M}$ by taking $\widehat{L} = L \cup_{\Lambda} [1,\infty)\times \Lambda$, where $\widehat{M}$ denotes the symplectic completion of $M$, i.e.,
$$\widehat{M} = M \cup_{\Sigma
} [1,\infty) \times \Sigma,$$
with the symplectic form $\omega=d\lambda$ extended to $[1,\infty)\times \Sigma$ as
$$\omega = d(r\alpha),$$
where $r$ is the coordinate in $[1,\infty)$.
We also call the extended Lagrangians in $\widehat{M}$ asymptotically conical. In what follows, we will deal with \emph{exact asymptotically conical} Lagrangians, i.e., Lagrangians $(L,\partial L) \subset (M,\Sigma)$ that are asymptotically conical, and $\lambda|_{L} = df$, for some $f \in C^{\infty}(L)$. We refer to Lagrangians satisfying the last condition as \emph{exact Lagrangians}. Throughout the rest of the paper, we assume that all the Legendrian and Lagrangian submanifolds considered are connected.
\begin{remark} \label{intersectionpointshaveaction0}
    For an exact asymptotically conical Lagrangian $L$, the primitive $f$ for $\lambda_{L}$ can be taken identically zero by a modification of $\lambda$ without changing the dynamics of the Reeb flow on the boundary $\partial M$. Indeed, since $f$ is constant in a neighborhood of the boundary, by adding a constant to it we can assume the constant is zero, i.e., $f\equiv 0$ on that neighborhood. Now we extend $f$ to a function $F$ on $M$ with compact support in M minus the collar neighborhood. By adding $-dF$ to $\lambda$, we obtain a new Liouville form, which we still denote by $\lambda$, so that $\lambda|_{L}=0$. This procedure changes $\lambda$ in the interior of $M$ but not at the boundary. In the case of a pair of exact asymptotically conical Lagrangians $(L_{0},L_{1})$, we cannot use the same argument to change $\lambda$ in such a way that $\lambda|_{L_{0}}=\lambda|_{L_{1}} \equiv 0$, but we can assume $f_{0}(x)=f_{1}(x)$, for all intersection points $x\in L_0 \cap L_1$, when $L_0 \pitchfork L_1$, by modifying $\lambda$ in a neighborhood of each intersection point. Notice that since $L_0 \pitchfork L_1$, there are no intersection on the collar and therefore this modification does not effect the dynamics on the boundary. 
\end{remark}   
\subsection{Hamiltonians and action filtration}
Now we present the basic construction of wrapped Floer homology and define the persistence module obtained from it. We mostly follow \cite{meiwes2018rabinowitz}, \cite{ritter2013topological}, \cite{cineli2023invariant} and \cite{fernandes2024barcode}.

Let $(M,\lambda)$ be a Liouville domain, and fix $L_{0}$, $L_{1}$ exact asymptotically conical Lagrangians in $M$ with $\Lambda_{0}=\partial L_{0}$ and $\Lambda_{1}= \partial L_{1}$.  

A Hamiltonian $H: \widehat{M} \rightarrow \mathbb{R}$ is called \emph{admissible} if 
\begin{itemize}
    \item $H<0$ on $M$, and
    \item There exists $T>0$, $B \in \mathbb{R}$, and $r_0 \geq 1$ such that, $H(r,x)=rT - B$ on $[r_0,\infty)\times \partial M$.
\end{itemize}
We call $T$ the \textit{slope of $H$ at infinity}  and denote it by $slope(H)=T$. When H satisfies only the last condition, we call it \textit{linear at infinity}. In what follows, we will need to deal with Hamiltonians $H$ so that $H|_{M} \equiv 0$, and are linear at infinity. We will refer to these Hamiltonians as \textit{semi-admissible}. It will be convenient to consider the class of  \emph{convex (semi-)admissible} Hamiltonians. These are (semi-)admissible Hamiltonians $H$ such that
\begin{itemize}
    \item $H|_M \equiv const$ ,
    \item $H(r,x) = h(r)$, in $[1,r_{max}) \times \partial M$, for $h:[1,r_{max}] \rightarrow \mathbb{R}$ with $h''>0$ in $(1,r_{max})$,
    \item $H$ linear at infinity in $[r_{max}, \infty) \times \partial M$, with $slope(H) = T$. 
\end{itemize}

We emphasize that $r_{max}$ depends on $H$, i.e., $r_{max} = r_{max}(H)$, but for convenience, we might omit $H$ from the notation when the Hamiltonian is well understood.

We denote by $X_H$ the Hamiltonian vector field associated to a Hamiltonian $H:\widehat{M}\rightarrow \mathbb{R}$, defined by
$$\omega(X_H,\cdot) = -dH,$$
along with $\varphi_H^t$, for $t\in \mathbb{R}$, and $\varphi_H = \varphi_H^1$,  its Hamiltonian flow and Hamiltonian diffeomorphism, respectively.  

We call a Hamiltonian $H:\widehat{M} \rightarrow \mathbb{R}$ non-degenerate for $L_{0}$ and $L_{1}$ if
$$\varphi_H(\widehat{L}_{0}) \pitchfork \widehat{L}_{1}.$$

Unless specified otherwise, all the Hamiltonians in this paper are considered to be admissible.

We denote the set of smooth chords from $\widehat{L_0}$ to $\widehat{L_1}$ by $\mathcal{P}_{L_0 \rightarrow L_1}$, i.e.,
$$\mathcal{P}_{L_0 \rightarrow L_1} = \{\gamma:[0,T] \rightarrow \widehat{M} \ \mid \ \gamma(0) \in \widehat{L_0}, \ \gamma(T) \in \widehat{L_1}\}.$$
For a chord $\gamma : [0,T]\rightarrow \widehat{M} \in \mathcal{P}_{L_0 \rightarrow L_1}$, we call $T$ the \emph{length} and denote it by $length(\gamma)$. We denote by $\widehat{\mathcal{P}}_{L_0 \rightarrow L_1}$ the chords in $\mathcal{P}_{L_0 \rightarrow L_1}$ with length $1$.

For the Reeb vector field $R_\alpha$ on the boundary $(\Sigma, \xi_{M})$ associated to the contact form $\alpha$, we denote its flow by $\varphi_{\alpha}^t$, with $t \in \mathbb{R}$. The set of lengths of Reeb chords from $\Lambda_{0}$ to $\Lambda_{1}$ will be denoted by $\mathcal{S}(\alpha,\Lambda_{0}\rightarrow \Lambda_{1})$ . This is a closed, nowhere dense subset of $[0,\infty)$.

 Note that for a convex Hamiltonian on $[1,\infty) \times \Sigma$, 
$$X_H = h'(r)R_{\alpha}.$$
In this case, the Hamiltonian chords from $\widehat{L_0}$ to $\widehat{L_1}$ in $\{r\}\times \Sigma$ with length $1$, for $r \geq1$, correspond to Reeb chords of length $t= h'(r)$.

To a Hamiltonian $H$, we associate the \textit{action functional} $\mathcal{A}^{L_{0} \rightarrow L_{1}}_{H} = \mathcal{A}_{H} : \widehat{\mathcal{P}}_{L_{0} \rightarrow L_{1}} \rightarrow \mathbb{R}$ defined by 
\begin{equation} \label{eq: action functional}
\mathcal{A}_{H}(\gamma) = f_{0}(\gamma(0)) - f_{1}(\gamma(1)) + \int^{1}_{0} \gamma^{*}\lambda - \int^{1}_{0}H(\gamma(t))dt,
\end{equation}
where for $i=0,1$, the $f_{i}$ is a smooth functions on $L_{i}$ such that $df_{i} = \lambda|_{L_{i}}$.  It is not hard to see that the critical points of $\mathcal{A}_{H}$ are the Hamiltonian chords of $H$ from $\widehat{L}_{0}$ to $\widehat{L}_{1}$ with length $1$ (which we call from now on \textit{Hamiltonian chords} when the Lagrangians are well understood). We denote by $Crit(H,L_0 \rightarrow L_1)$ the set of critical points of $\mathcal{A}_{H}$. Notice that from Remark \ref{intersectionpointshaveaction0}, the action functional \eqref{eq: action functional} reduces to 
$$\mathcal{A}_{H}(\gamma) =  \int^{1}_{0} \gamma^{*}\lambda - \int^{1}_{0}H(\gamma(t))dt,$$
for chords $\gamma$ in $[1,\infty) \times \Sigma$.

For a convex semi-admissible Hamiltonian, we can associate the \textit{reparametrization function} that transforms lengths in actions. More precisely:
\begin{lemma}
    For a convex semi-admissible Hamiltonian $H$, with $H(r,x)= h(r)$ in $[1,r_{max}(H))\times \Sigma$ and $H(r,x) = rT - B$, in $r \geq r_{max}$, the reparametrization function $\widetilde{A_H} =\widetilde{A_h}$ that transforms lengths in actions given by $\widetilde{A_h}(t)=rh^{'}(r) - h(r)$, where $h'(r)=t$ is a well defined, continuous monotone increasing bijection between $[0,T]$ and $[0,B]$.
\end{lemma}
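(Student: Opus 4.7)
The plan is to show that the three claimed properties (well-definedness, monotonicity, and the correct bijection image) all follow essentially from the convexity condition $h'' > 0$ together with the matching conditions at the two endpoints $r = 1$ and $r = r_{\max}$.

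First, I would observe that since $H$ is semi-admissible, $H|_M \equiv 0$ forces $h(1) = 0$ and, by smoothness of the matching on the collar, $h'(1) = 0$. At the other end, matching with the affine piece $rT - B$ gives $h'(r_{\max}) = T$ and $h(r_{\max}) = r_{\max} T - B$. Convexity ($h'' > 0$ on $(1, r_{\max})$) then makes $h' : [1, r_{\max}] \to [0, T]$ a smooth, strictly increasing bijection. Consequently, for each $t \in [0,T]$ there exists a unique $r = r(t) \in [1, r_{\max}]$ with $h'(r) = t$, and $r(t)$ depends smoothly on $t$ by the inverse function theorem. This gives well-definedness and continuity of $\widetilde{A_h}(t) = r(t)\,t - h(r(t))$.

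Next, I would compute the derivative directly to verify monotonicity. Differentiating and using $h'(r(t)) = t$, the two terms involving $r'(t)$ cancel:
\[
\frac{d}{dt}\widetilde{A_h}(t) = r'(t)\,t + r(t) - h'(r(t))\,r'(t) = r(t).
\]
Since $r(t) \geq 1 > 0$, this derivative is strictly positive, so $\widetilde{A_h}$ is strictly increasing. Finally, I would evaluate at the endpoints: at $t = 0$ we have $r(0) = 1$, giving $\widetilde{A_h}(0) = 0 - h(1) = 0$, and at $t = T$ we have $r(T) = r_{\max}$, giving $\widetilde{A_h}(T) = r_{\max} T - (r_{\max} T - B) = B$. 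Combined with strict monotonicity and continuity, this yields the claimed bijection $[0,T] \to [0,B]$.

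No step is really difficult here; the only subtlety worth flagging is ensuring the boundary values $h(1) = 0$ and $h'(1) = 0$ at the inner endpoint, which require reading off the semi-admissibility condition ($H \equiv 0$ on $M$) together with $C^1$ matching to the convex piece. Once those are pinned down, the rest is a direct consequence of convexity and the clean cancellation in the derivative computation above.
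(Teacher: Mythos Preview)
Your proof is correct and, in fact, somewhat cleaner than the paper's. The paper establishes well-definedness and monotonicity simultaneously via the integral identity
\[
(rh'(r)-h(r))\big|_{r'}^{r}=\int_{r'}^{r}\rho\,h''(\rho)\,d\rho,
\]
and then sandwiches this between $h'(r)-h'(r')$ and $r_{\max}(h'(r)-h'(r'))$ using $1\leq \rho\leq r_{\max}$. From the sandwich one reads off both that $h'(r)=h'(r')$ forces equal values of $rh'(r)-h(r)$ (well-definedness) and that the map is monotone in $t=h'(r)$. You instead use strict convexity to invert $h'$ outright and compute $\frac{d}{dt}\widetilde{A_h}(t)=r(t)$ directly; this is exactly the Legendre-transform identity, and it gives strict monotonicity in one line. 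Your approach is more transparent but relies on $h''>0$ to invert $h'$, whereas the paper's inequality argument would still yield well-definedness even if $h'$ were merely non-decreasing. Since the hypotheses here include $h''>0$ on $(1,r_{\max})$, both routes are valid; yours is shorter, the paper's is marginally more robust. The endpoint checks are identical in both proofs.
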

\begin{proof}
    For $1 \leq r < r' \leq r_{max}$, then
    \begin{equation} \label{eq: monotonicity of reparamentrization}
         rh'(r) -h(r)|^{r}_{r'} = \int^{r}_{r'} (rh'(r) - h(r))' dr = \int^{r}_{r'} rh''(r) dr,
    \end{equation}
    and by the convexity of $H$, 
    \begin{equation} \label{eq: welldefinitinessofreparametrization}
       h'(r) - h'(r') = h'(r)|^{r}_{r'} \leq (rh'(r) -h(r))|^{r}_{r'} \leq r_{max}h'(r)|^{r}_{r'} = r_{max}(h'(r)- h(r')).
    \end{equation} 
    So, if  $h'(r)=h'(r')$, then from \eqref{eq: welldefinitinessofreparametrization}, we get $rh'(r) - h(r) = r'h'(r')-h(r')$, which proves that $\widetilde{A_h}$ is well defined. The monotonicity follows from \eqref{eq: welldefinitinessofreparametrization} since $h$ is convex, and continuity is a consequence of the formula. Now, since 
    $$\widetilde{A_h}(0) = h(0) = 0 \ \text{and} \ \widetilde{A_h}(T) = r_{max}h'(r_{max}) - h(r_{max}) = B,$$
    we conclude that $\widetilde{A_h}$ is a bijection between $[0,T]$ and $[0,B]$. 
\end{proof}
In what follows, we will be interested in the function $A_h = \widetilde{A_h} \circ (h')$, which we keep referring to as reparametrization function, from the interval $[1,r_{max}]$ to $[0,B]$. Since the convex semi-admissible Hamiltonians considered here are in particular strictly convex, we obtain that $A_h$ is, similarly to $\widetilde{A_h}$, a continuous monotone increasing bijection.  

\subsection{Floer homology and continuation maps} In this subsection, we establish notation and outline the construction of wrapped Floer homology.

An almost complex structure $J$ on $([1,\infty) \times \Sigma, \lambda =r \alpha)$ is called \emph{cylindrical} if
\begin{itemize}
    \item Preserves $\xi_{M}$, i.e., $J(\xi_{M})= \xi_{M}$,
    \item $J|_{\xi_M}$ is independent of $r$,
    \item $J(r \partial_{r}) = R_{\alpha}$, for $r\geq 1$.
\end{itemize}
The first and last conditions are equivalent to 
\begin{equation} \label{eq: relation between dr and alpha}
    dr\circ J = - r\alpha.
\end{equation}
If these conditions hold only for $r \geq r_{0}$, for some $r_{0}\geq 1$, then we call $J$ asymptotically cylindrical, or cylindrical at infinity. We say that an almost complex structure in $\widehat{M}$ \emph{admissible} if it is compatible with $\omega$ and cylindrical at infinity. 

Fix an admissible almost complex structure in $\widehat{M}$, and consider the $\omega$-compatible Riemannian metric $g$ in $\widehat{M}$ satisfying $\omega(\cdot, J \cdot) = g(\cdot, \cdot)$. By endowing $\widehat{\mathcal{P}}_{L_{0}\rightarrow L_{1}}$ with the induced metric from $g$, the solutions $u:\mathbb{R} \rightarrow \widehat{\mathcal{P}}_{L_0 \rightarrow L_1}$ of the gradient equation 
\begin{equation} \label{eq: gradient equation}
    \partial_{s}u =- \nabla \mathcal{A}_{H}(u),
\end{equation}
where $s\in \mathbb{R}$, for a Hamiltonian $H$, are equivalent to strips $u : \mathbb{R} \times [0,1] \rightarrow \widehat{M}$, refereed as \textit{Floer strips}, satisfying the equation
\begin{equation} \label{eq: Floer equation with boundary constraint}
    \begin{cases}
    $$\partial_{s}u -J(u)(\partial_{t}u - X_{H}(u))=0$$, \\
    $$u(\cdot , 0) \in \widehat{L}_{0}, \ u(\cdot , 1) \in \widehat{L}_{1}$$,
    \end{cases}
\end{equation}
where $(s,t) \in \mathbb{R}\times [0,1]$, which we will refer as \textit{Floer equation}.
Notice that the Floer strips decreases along $\mathcal{A}_H$, i.e., $s\rightarrow \mathcal{A}_H(u(s,\cdot))$ is decreasing.

We also point out that the leading term of this equation is not a $\Bar{\partial}$-operator, but a $\partial$-operator, that is, for $H \equiv 0$, the solutions for the resulting equation are anti-holomorphic curves instead of holomorphic curves. Regardless, there are no losses to work with the solutions of \eqref{eq: Floer equation with boundary constraint} instead of the classical Floer equation. Indeed, all the properties of the solutions of the classical Floer equation translates to this setting via the chance of variables $s\rightarrow -s$.

There are three facts about Floer strips we will use throughout this paper. In what follows, we fix an  admissible almost complex structure $J$, a convex (semi)-admissible Hamiltonian $H$, and $u$ a Floer strip asymptotic to the Hamiltonian chords $(r^{\pm},x^{\pm})$ as $s\rightarrow \mp \infty$, i.e., $u_s:= u(s,\cdot) \rightarrow (r^{\pm},x^{\pm})$ as $s\rightarrow \mp \infty$, where $x^{\pm}$ are Reeb chords in $\mathcal{T}_{\Sigma_0 \rightarrow \Sigma_1}(\alpha)$.

The first fact is the standard \textit{maximum principle}. The version that we will need states that for Floer strips $u$ as above do not go beyond the level $r^+$, i.e.,
\begin{equation} \label{eq: maximum principle}
    \sup_{(s,t) \in \mathbb{R}\times [0,\tau]} r(u(s,t)) \leq r^+.
\end{equation}
We emphasize that the version of the maximal principal we just described is not the most general one one may find. We refer to \cite{ritter2013topological} and references therein. 

The second fact we will use is less standard. We will refer to it as the Bourgeois-Oancea and it goes back to [\cite{bourgeois2009exact},p. 654]; see also [\cite{cieliebak2018symplectic}, Lemma 2.3]. It asserts that the curves $u_s$ need to raise at least to the level $r^-$, i.e.,

\begin{equation} \label{eq: Bourgeois-Oancea}
    \max_{t\in [0,\tau]} r(u(s,t)) \geq r^-,
\end{equation}
for all $s\in \mathbb{R}$. For the sake of completeness, we will present a proof of this fact on the last section. 

To state the third fact, we recall that the \textit{energy} of a Floer strip $u$ is defined by 

$$E(u):= \int_{\mathbb{R}\times [0,1]} ||\partial_s u(s,t)||^2dsdt.$$

When $u$ is asymptotic to the Hamiltonian chords $(r^{\pm},x^{\pm})$ as $s\rightarrow \mp \infty$, then 
$$E(u) = \mathcal{A}_h(r^+) - A_h(r^-).$$

Finally, the last fact we will use, which is also standard, states that for any $r>1$, there exists a constant $C=C(r,J,H)$ such that for any Floer strip $u$ as in \eqref{eq: Floer equation with boundary constraint} asymptotic to a chord $(r',x)$ at either end with $E(u)<C$, and $r'\geq r$, has image in $[1,\infty)\times \Sigma$, i.e.,

\begin{equation} \label{eq: curve confined in the cilyndrical part}
    u(s,t) \in [1,\infty)\times\ \Sigma, \text{ for all } (s,t) \in \mathbb{R}\times [0,1].
\end{equation}

\subsection{Filtered wrapped Floer homology}

In what follows, we summarize the construction of the wrapped Floer homology persistence module for (semi)-admissible Hamiltonians in Liouville domains, assuming the well definiteness of wrapped Floer homology for non degenerate, linear at infinity Hamiltonians. We refer to \cite{fernandes2024barcode} for a detailed construction of the filtered wrapped Floer homology with coefficients in $\mathbb{Z}/2\mathbb{Z}$ for linear at infinity Hamiltonians.

Let $H$ be a (semi)-admissible Hamiltonian with $slope(H) \notin \mathcal{S}(\alpha, \Lambda_0 \rightarrow \Lambda_1)$. For any $t \notin \mathcal{S}(H)$, we set
$$HW^t(H,L_0 \rightarrow L_1):= HW^t(\Tilde{H},L_0 \rightarrow L_1),$$
where $\Tilde{H}$ is a non degenerate perturbation of $H$ with $slope(\Tilde{H}) = slope(H)$. From the continuation map, we see that $HW^t(H,L_0 \rightarrow L_1)$ is independent of the perturbation $\Tilde{H}$, as long as the perturbation is sufficiently small. Note that, from the above definition, for $t',t \notin \mathcal{S}(H)$, with $t'<t$, we have the "inclusion map"
$$HW^{t'}(H,L_0 \rightarrow L_1) \rightarrow HW^t(H,L_0 \rightarrow L_1),$$
defined as the "inclusion map" of a small perturbation $\Tilde{H}$ of $H$.

Now, if $t \in \mathcal{S}(H)$, we set
$$HW^t(H,L_0 \rightarrow L_1) := \lim_{\begin{smallmatrix} \longrightarrow & \\ t' \leq t \end{smallmatrix}} HW^{t'}(H,L_0 \rightarrow L_1),$$
where the limit it taken with respect to the "inclusion maps". 

In order to remove the assumption $slope(H) \notin \mathcal{S}(H,L_0 \rightarrow L_1)$, we set
$$HW^t(H,L_0 \rightarrow L_1):= \lim_{\begin{smallmatrix} \longrightarrow & \\ H' \leq H \end{smallmatrix}}HW^{t}(H',L_0 \rightarrow L_1),$$
where the limit is taken over Hamiltonians $H'\leq H$ with $slope(H') \notin \mathcal{S}(\alpha,L_0 \rightarrow L_1)$. Note that 
$$HW^t(H,L_0 \rightarrow L_1) = \{0\} \text{ for } t\leq 0,$$
for convex semi-admissible Hamiltonians $H$. It turn out the family of vector spaces $t \rightarrow HW^t(H,L_0 \rightarrow L_1)$ together with the "inclusion maps" form a persistence module in the sense of Section 2 in \cite{fernandes2024barcode} for semi-admissible Hamiltonians $H$.

Finally, the filtered wrapped Floer homology $HW^t(M,L_0 \rightarrow L_1)$ is defined as

\begin{equation} \label{defwrap}
    HW^a(M,L_0 \rightarrow L_1) := \lim_{\begin{smallmatrix}
    \longrightarrow \\ H
\end{smallmatrix}} HW^a(H,L_0 \rightarrow L_1),
\end{equation}
where the limit is taken over the set of admissible Hamiltonians. Since convex admissible Hamiltonians form a cofinal sequence, we can consider $H$ on this class. Furthermore, we set

\begin{equation} \label{conventionwrapa<0}
    HW^a(M,L_0 \rightarrow L_1) := 0 \ \text{when} \ a \leq 0.
\end{equation}
This construction leads to a family of vector spaces which, together with some natural maps, constitute a persistence module in the sense of the definitions in section 2 on \cite{fernandes2024barcode}. More precisely:
\begin{proposition}[Corolary 1, \cite{fernandes2024barcode}]
    The family of vector spaces $t \rightarrow HW^t(M,L_0 \rightarrow L_1)$ is a persistence module, with structure maps the direct limit of the "inclusion" maps. 
\end{proposition}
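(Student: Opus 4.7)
The plan is to verify the persistence module axioms for the family $t \mapsto HW^t(M,L_0\rightarrow L_1)$ by propagating the structure through the two successive direct limits used in the definition, starting from the Hamiltonian-level construction where everything is essentially already known.

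First, I would fix a convex semi-admissible Hamiltonian $H$ with $\mathrm{slope}(H)\notin \mathcal{S}(\alpha,\Lambda_0\rightarrow \Lambda_1)$ and check that $t\mapsto HW^t(H,L_0\rightarrow L_1)$ is a persistence module in the sense of Section~2 of \cite{fernandes2024barcode}. For $t\notin \mathcal{S}(H)$, this reduces to the standard filtered Floer complex of a small non-degenerate perturbation $\widetilde{H}$, whose filtration-induced maps $HW^{t'}(\widetilde{H})\rightarrow HW^{t}(\widetilde{H})$ for $t'<t$ give the structure maps; independence of $\widetilde{H}$ is the usual continuation argument, and transitivity and identity are inherited from the chain level. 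For $t\in \mathcal{S}(H)$, the left-continuous direct limit definition automatically delivers the structure maps and their compatibility. For admissible $H$ whose slope does lie in $\mathcal{S}(\alpha,\Lambda_0\rightarrow \Lambda_1)$, one then passes to the direct limit over $H'\leq H$ with $\mathrm{slope}(H')\notin \mathcal{S}(\alpha,\Lambda_0\rightarrow \Lambda_1)$, using that monotone continuation maps do not increase action (by the standard energy estimate for $s$-monotone homotopies) so that they intertwine the structure maps in $t$ with the structure maps in $H$.

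The final step is to take the direct limit over all convex admissible Hamiltonians, which form a cofinal directed subsystem of the admissible Hamiltonians. Naturality of the continuation maps with respect to the action filtration means that, in the category of persistence modules, the colimit in the Hamiltonian variable commutes with the structure maps in the action variable. This produces, in the limit, a well-defined family $t\mapsto HW^t(M,L_0\rightarrow L_1)$ whose structure maps are, by construction, the direct limits of the inclusion maps at the Hamiltonian level. The convention \eqref{conventionwrapa<0} and the fact that $HW^t(H,L_0\rightarrow L_1)=0$ for $t\leq 0$ on convex semi-admissible $H$ guarantee vanishing for $t\leq 0$.

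The main obstacle I expect is the tameness/semicontinuity condition built into the persistence module definition in \cite{fernandes2024barcode}: one must check that spectral values accumulate only as $t\rightarrow \infty$, and that left-continuity persists under both direct limits. This in turn reduces to two facts about the Reeb flow: that $\mathcal{S}(\alpha,\Lambda_0\rightarrow \Lambda_1)$ is closed and nowhere dense in $[0,\infty)$, so that below any finite $t$ only finitely many spectral values appear in each $HW^t(H)$; and that the reparametrization function $A_h$ controls how the action of a convex Hamiltonian depends on its slope uniformly in $H$. Once these are in place, a diagram chase verifying persistence module naturality through both successive direct limits completes the proof, as carried out in \cite{fernandes2024barcode}.
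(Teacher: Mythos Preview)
The paper does not give its own proof of this proposition: it is stated as Corollary~1 of \cite{fernandes2024barcode} and simply cited, with no argument provided here. So there is nothing in the present paper to compare your proposal against; the author is importing the result wholesale from the companion paper.

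Your outline is a reasonable sketch of what the argument in \cite{fernandes2024barcode} presumably looks like---passing the persistence-module axioms through the two successive direct limits, using that monotone continuation maps respect the action filtration and that the spectrum is closed and nowhere dense. But since the present paper treats this as a black box, your write-up goes well beyond what is needed here: a one-line reference to \cite{fernandes2024barcode} is all the paper offers, and all that is required.
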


\section{Wrapped Floer homology barcode entropy}
In this section, we revisit one of the central concepts of this paper: the wrapped Floer homology barcode entropy. Additionally, we introduce an equivalent definition that will be useful for the proof of Theorem B. 

For a Liouville domain $(M,\lambda)$, and two asymptotically conical Lagrangians $L_0$ and $L_1$, we define the \textit{wrapped Floer homology barcode entropy} in the following way:
\begin{definition}
    The wrapped Floer homology \textbf{barcode entropy} of $M$ is defined by 
    $$\hbar^{HW}(M,L_0 \rightarrow L_1) =\lim_{\epsilon \rightarrow 0} \limsup_{t \rightarrow \infty} \frac{\log^{+}\text{\textcrb}_{\epsilon}(M,L_0 \rightarrow L_1,t)}{t},$$
\end{definition}

We will drop the Lagrangians from the notation when they are well understood. 

In what follows, we present a more convenient way to compute the wrapped Floer homology barcode entropy using the filtered wrapped Floer homology of a single Hamiltonian. 

For any convex semi-admissible Hamiltonian $H$ in $\widehat{M}$ with $H(r,x)=rT-B$, for $r \geq r_{max}(H)$, and any $s>0$, we can consider the barcode associated to the persistence module $t \rightarrow HW^t(sH)$. We use this family of barcodes to define the \emph{wrapped Floer homology barcode entropy of $H$} as follows.

\begin{definition}
For a convex semi-admissible Hamiltonian $H$, we define the wrapped Floer homology barcode entropy of $H$ to be
$$\hbar^{HW}(H,L_0 \rightarrow L_1) =\lim_{\epsilon \rightarrow 0} \limsup_{s \rightarrow \infty} \frac{\log^{+}\text{\textcrb}_{\epsilon}(sH,sB)}{sT}.$$
\end{definition}

\begin{proposition}[Proposition 4, \cite{fernandes2024barcode}] \label{sequecialbarcodeentropy}
    For a convex semi-admissible Hamiltonian $H$ in a Liouville domain $(M,\lambda)$, and two exact asymptotically conical Lagrangians $L_0$ and $L_1$,  we have
    $$ \hbar ^{HM}(H,L_0 \rightarrow L_1) = \hbar ^{HM}(M,L_0 \rightarrow L_1).$$
\end{proposition}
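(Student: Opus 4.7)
The plan is to compare, up to an explicit reparametrization, the barcode of $HW^\cdot(sH, L_0 \rightarrow L_1)$ with that of $HW^\cdot(M, L_0 \rightarrow L_1)$, and then translate this comparison into the equality of barcode entropies. The key point is that every Reeb chord from $\Lambda_0$ to $\Lambda_1$ of length at most $sT$ corresponds to a unique $sH$-chord, whose $sH$-action is obtained from its Reeb length by a dilation that becomes the identity in the limit $s \to \infty$.

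Concretely, I would first extend the reparametrization lemma in the excerpt to the rescaled family $\{sH\}_{s>0}$. Define
$$\psi_s(\tau) := s A_h\bigl((h')^{-1}(\tau/s)\bigr), \qquad \psi_s:[0,sT] \to [0,sB].$$
A direct computation using $A_h'(r) = r h''(r)$ gives $\psi_s'(\tau) = (h')^{-1}(\tau/s) \in [1, r_{max}]$, so $\psi_s$ is a bi-Lipschitz homeomorphism with constants $1$ and $r_{max}$. Next, I would argue that the continuation maps from $sH$ to Hamiltonians of higher slope, assembled via the direct limit \eqref{defwrap} and using cofinality of $\{s'H\}_{s' \geq s}$ among admissible Hamiltonians, yield an isomorphism of persistence modules
$$HW^{\psi_s(\tau)}(sH, L_0 \rightarrow L_1) \;\cong\; HW^{\tau}(M, L_0 \rightarrow L_1), \qquad \tau \in [0, sT),$$
and hence a bijection between bars of $HW^\cdot(sH)$ with left endpoint in $[0, sB)$ and bars of $HW^\cdot(M)$ with left endpoint in $[0, sT)$, with endpoints matched via $\psi_s$.

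Combining the bi-Lipschitz bounds on $\psi_s$ with this bar bijection yields
$$\text{\textcrb}_{\epsilon}(M, L_0 \rightarrow L_1, sT) \;\leq\; \text{\textcrb}_{\epsilon}(sH, sB) \;\leq\; \text{\textcrb}_{\epsilon/r_{max}}(M, L_0 \rightarrow L_1, sT).$$
Dividing by $sT$, taking $\limsup_{s \to \infty}$ (noting that $t = sT$ sweeps all of $(0, \infty)$, so this limsup agrees with the one over $t$), and letting $\epsilon \to 0$ so that $\epsilon/r_{max} \to 0$ as well, both the left and right sides converge to $\hbar^{HW}(M, L_0 \rightarrow L_1)$, sandwiching the middle and giving the desired equality.

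The main obstacle will be the barcode identification. Rigorously checking that continuation maps between convex semi-admissible Hamiltonians of different slopes intertwine the two action filtrations via $\psi_s$, and that this passes cleanly to the direct limit defining $HW^\cdot(M, L_0 \rightarrow L_1)$, requires careful tracking of how Hamiltonian chords migrate through the cylindrical region of $\widehat{M}$ as the slope is increased. Once this identification is established, the remaining steps are routine bookkeeping.
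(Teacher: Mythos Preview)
This proposition is not proved in the present paper: it is quoted verbatim from \cite{fernandes2024barcode} (as the label ``Proposition~4, \cite{fernandes2024barcode}'' indicates) and used here as a black box in the proof of Theorem~B. There is therefore no in-paper argument to compare your proposal against.

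That said, your outline is coherent and is very likely the strategy carried out in \cite{fernandes2024barcode}. The computation $\psi_s'(\tau)=(h')^{-1}(\tau/s)\in[1,r_{max}]$ is correct, and the resulting sandwich
\[
\text{\textcrb}_{\epsilon}(M,L_0\rightarrow L_1,sT)\;\leq\;\text{\textcrb}_{\epsilon}(sH,sB)\;\leq\;\text{\textcrb}_{\epsilon/r_{max}}(M,L_0\rightarrow L_1,sT)
\]
does yield the equality of barcode entropies after taking $\limsup_{s\to\infty}$ and then $\epsilon\to 0$. You have already identified the genuine work: proving the persistence-module isomorphism $HW^{\psi_s(\tau)}(sH)\cong HW^{\tau}(M)$ on $[0,sT)$. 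Two points deserve care there. First, the direct limit in \eqref{defwrap} is taken over \emph{admissible} Hamiltonians ($H<0$ on $M$), whereas $sH$ is only semi-admissible ($H\equiv 0$ on $M$); one must check that the cofinal family $\{s'H\}_{s'\geq s}$ computes the same limit, or else interpolate through admissible Hamiltonians and control the action shift. Second, bars of $HW^\cdot(M)$ that begin below $sT$ but extend beyond it are only seen in $HW^\cdot(sH)$ as bars capped at $sB$ (or as infinite bars), so the length comparison for such bars needs a separate (easy) argument; this does not affect the entropy since one lets $s\to\infty$ first.
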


\section{Proof of Theorem B}

This section is dedicated to the proof of Theorem \hyperref[thmB]{B}. We begin by recalling some key facts about hyperbolic dynamics and establishing a relative version of the equality between topological entropy and the exponential growth of periodic orbits in locally maximal hyperbolic sets, more precisely stated in Proposition \ref{prop: top ent coincide growth of per orb}. Next, we state a relative version of the \textit{Crossing Energy Theorem} \ref{thm: crossing energy theorem}, and use it to prove Theorem B. In the final part of this section, we present and prove a relative version of the \textit{Location Constraint Theorem} \ref{thm: location constraint}, which is not only essential to the proof of the Crossing Energy Theorem but also of independent interest. Lastly, we include a proof of the Bourgeois-Oancea result \eqref{eq: Bourgeois-Oancea} for the sake of completeness.

\subsection{Exponential growth of chords and topological entropy}
We start this subsection by fixing notation and recalling some facts about flows and hyperbolic sets. 

Consider $\varphi: \mathbb{R}\times M \rightarrow M$ is a $C^{\infty}$ flow in a closed manifold $M$. We denote $\varphi^t$ the time $t$ map of the flow, i.e., $\varphi^t(x) = \varphi(t,x)$. Let $K$ be a hyperbolic set for $\varphi$. For $q \in K$, and $\delta >0$, we denote by 
$$\stable{q} = \{y \in M \mid d(\varphi^t(y),\varphi^t(q)) \leq \delta, \text{ for all } t \geq 0\}$$
and
$$\unstable{q} = \{y \in M \mid d(\varphi^t(y),\varphi^t(q)) \leq \delta, \text{ for all } t \leq 0\},$$
the \textit{local stable} and \textit{unstable manifolds} of $q \in K$, respectively. For what follows it is worth noting that there exists $\delta = \delta^*(K)$ such that for all $\delta \leq\delta^*$, the local stable and unstable manifolds $\stable{q}$ and $\unstable{q}$ are embedded disks in $M$, and there exist $\lambda,c>0$ such that 
$$\varphi^t(\stable{q}) \subset W^s_{\delta c e^{-\lambda t}}(\varphi^t(q)), \text{ for all } t\geq 0,$$
and
$$\varphi^t(\unstable{q}) \subset W^u_{\delta c e^{\lambda t}}(\varphi^t(q)), \text{ for all } t \leq 0.$$
Furthermore, $\delta^*$ is an \textit{expansivity constant}, i.e., for $x,y \in K$, $s:\mathbb{R} \rightarrow \mathbb{R}$, $s(0)=0$ with $d(\varphi^t(x),\varphi^{s(t)}(y)) \leq \delta^*$, for all $t \in \mathbb{R}$, then $\varphi^t(x) = y$, for some $t \in \mathbb{R}$. We point out that, in the literature, the expansivity of a flow has a stronger definition than the one presented above. However, since we are only interested in distinguishing orbits, the description provided here is sufficient.

A hyperbolic set $K$ for a flow $\varphi^t$ is called \textit{locally maximal} if there exists a neighborhood $K \subset U$ such that $K$ is the largest invariant set in  $U$ by the flow, i.e., 
$$\bigcap_{t \in \mathbb{R}} \varphi^t(U) = K.$$

For $x \in M$, $\mathcal{O}(x)=\{\varphi^t(x) \mid t \in \mathbb{R}\}$ denotes the \textit{full orbit} of $x$, and $\mathcal{S}^{\tau}(x)= \{\varphi^t(x) \mid 0 \leq t \leq \tau \}$ the \textit{orbit segment} of length $\tau$ starting at $x$. For $q,p \in K$, $\tau \geq 0$, and $\delta \geq 0$, we say that $\mathcal{S}^{\tau}(x)$ is a $(q,p,\tau,\delta)$-chord if $\mathcal{S}^{\tau}(x) \subset K$, $x\in W^u_{\delta}(q)$, and $\varphi^{\tau}(x) \in W^s_{\delta}(p)$. We consider two $(q,p,\delta)$-chords to be the same if their full orbits are the same. 

In what follows, $C(q,p,\tau,\delta)$ denotes the set of $(q,p,\delta)$-chords of length at most $\tau$, and  $N(q,p,\tau,\delta)$ is the number of elements in $C(q,p,\tau,\delta)$. The next proposition is a relative version of the well known fact that, on a locally maximal hyperbolic set $K$ for $\varphi$, the topological entropy $\tope{\varphi|_{K}}$ coincides with the exponential growth of periodic orbit in $K$ with respect to the period. 


\begin{proposition} \label{prop: top ent coincide growth of per orb}
Let $K$ a locally maximal, topologically transitive hyperbolic set for a flow $\varphi$. Then for any $q,p \in K$, $0<\delta < \delta^* := \delta^*(K)/2$,
$$\limsup_{\tau \rightarrow \infty} \log \frac{N(q,p,\tau,\delta)}{\tau} = \tope{\varphi|_{K}}.$$
\end{proposition}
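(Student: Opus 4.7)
The plan is to establish the two inequalities separately, each adapting a classical fact about locally maximal topologically transitive hyperbolic sets to the relative setting where chords are anchored to $\unstable{q}$ and $\stable{p}$.

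For the upper bound $\limsup \log N(q,p,\tau,\delta)/\tau \leq \tope{\varphi|_K}$, the idea is that distinct $(q,p,\delta)$-chords must separate under the flow. Two chords with starting points $x_1,x_2 \in \unstable{q}$ satisfy $d(x_1,x_2)\leq 2\delta < \delta^*$, and by the definition of local unstable manifold one has $d(\varphi^t(x_1),\varphi^t(x_2))\leq 2\delta$ for all $t\leq 0$; a symmetric estimate holds forward of the chord endpoints, which lie in $\stable{p}$. If in addition the two orbits were $\delta^*$-close throughout $[0,\tau]$, expansivity of the flow on $K$ with constant $\delta^*$ would force them to coincide as full orbits, contradicting the distinctness of the chords. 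Hence the set of starting points of distinct $(q,p,\delta)$-chords of length at most $\tau$ is $(\tau,\delta^*)$-separated in $K$, and its cardinality grows at rate $\tope{\varphi|_K}$ by the classical characterization of topological entropy via separated sets.

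For the lower bound $\limsup \log N(q,p,\tau,\delta)/\tau \geq \tope{\varphi|_K}$, I would combine two classical ingredients. Bowen's theorem identifies $\tope{\varphi|_K}$ with the exponential growth rate, in the period, of periodic orbits contained in $K$. The specification property, valid for topologically transitive locally maximal hyperbolic sets, then produces for any closed orbit $\gamma$ of period $\tau$ in $K$ an orbit segment that $\epsilon$-shadows the concatenation $(q,\gamma,p)$ with transition gaps of length $T$ depending only on $\epsilon$, and lying in $K$ by local maximality. Using the local product structure of $K$ one adjusts the starting point into $\unstable{q}$ and arranges the endpoint to lie in $\stable{p}$, yielding a $(q,p,\tau+O(T),\delta)$-chord. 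Expansivity again guarantees that distinct periodic orbits produce distinct chords, and the additive shift $O(T)$ is harmless for exponential growth rates.

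The main obstacle will be the shadowing construction in the lower bound: one must simultaneously guarantee that the shadowing orbit lies in $K$ (using local maximality for a sufficiently small neighborhood $U$ of $K$), and that its endpoints land precisely in $\unstable{q}$ and $\stable{p}$ rather than merely small neighborhoods thereof. Balancing the shadowing constant, the specification gap $T$, and the exponential contraction constants $c,\lambda$ so that all of these conditions hold at once requires some care. Once these choices are fixed, however, the counting argument and the bookkeeping of distinctness via expansivity parallel the classical non-relative version of the statement.
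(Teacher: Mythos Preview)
Your approach is correct and largely parallels the paper's. The upper bound is handled identically: distinct chords give a $(\tau,\delta)$-separated set via expansivity (using the backward/forward closeness coming from $\unstable{q}$ and $\stable{p}$), and one concludes by the separated-set characterization of entropy.

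For the lower bound the paper takes a slightly more direct route. Rather than passing through Bowen's theorem on periodic-orbit growth, the paper applies the Specification Theorem directly to elements of a $(\tau,\epsilon)$-separated set $E_\tau$: for each $y \in E_\tau$ it builds the three-piece specification consisting of $q$ at time $0$, the orbit segment of $y$ on $[M_{\epsilon/2}, \tau + M_{\epsilon/2}]$, and $p$ at time $\tau + 2M_{\epsilon/2}$, and obtains an $\epsilon/2$-shadowing orbit in $K$. This avoids the intermediate appeal to Bowen, and distinctness of the resulting chords is immediate from $\epsilon/2$-shadowing of $\epsilon$-separated orbit segments, rather than requiring a further expansivity argument as in your outline. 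As for the obstacle you flag---placing the endpoints precisely in $\unstable{q}$ and $\stable{p}$---the paper does not balance constants against the local product structure but simply appeals to the construction \emph{inside the proof} of the Specification Theorem, where the shadowing orbit is produced via iterated intersections of pushed local stable and unstable manifolds and therefore automatically has the required endpoints. Your proposed route via local product structure would also work and amounts to the same mechanism.
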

\begin{proof}
    We start by showing that 
    \begin{equation} \label{eq: inq exp growth nbm per orb bound by ent}
        \limsup_{\tau \rightarrow \infty} \log \frac{N(q,p,\delta,\tau)}{\tau} \leq \tope{\varphi|_K}.
    \end{equation}
    Consider two distinct chords $\mathcal{S}^{t_1}(x)$ and $\mathcal{S}^{t_2}(y)$ in $C(q,p,\tau,\delta)$. Then there exists $0 \leq t \leq \tau$ such that $d(\varphi^t(x), \varphi^t(y)) \geq \delta$, otherwise $d(\varphi^t(x),\varphi^t(y)) \leq \delta^*$ for all $t \in \mathbb{R}$ and by the expansivity property of $\delta^*$, $\mathcal{O}(x) = \mathcal{O}(y)$. By the formulation of $\tope{\varphi|_K}$ using the exponential growth of the number of separated sets, it is easy to see that \eqref{eq: inq exp growth nbm per orb bound by ent} holds.
    
    For the reserve inequality, i.e., 
    \begin{equation} \label{eq: inq exp growth nbm per orb bound by ent 2}
        \limsup_{\tau \rightarrow \infty} \log \frac{N(q,p,\delta,\tau)}{\tau} \geq \tope{\varphi|_K},
    \end{equation}
    we make use of the \textit{Specification Theorem}. Let us begin by defining what a specification is. A \textit{specification} $S=(\eta,P)$ for $\varphi$ is a pair consisting of a finite collection $\eta = \{I_1,...,I_m\}$ of bounded intervals $I_i = [a_i,b_i] \subset \mathbb{R}$, and a map $P: T(\eta): \cup_{I \in \eta} I \rightarrow X$ such that for $t_1,t_2 \in I \in \eta$, we have $\varphi^{t_2 - t_1}(P(t_1)) = P(t_2)$. $S$ is said to be $M$-\textit{spaced} if $a_{i+1} > b_i + M$ for all $i\in \{1,...,m\}$, and the minimal such $M$ is called the \textit{spacing} of $S$. We say that $S$ is $\epsilon$-\textit{shadowed} by $x \in M$ if $d(\varphi^t(x),P(t))<\epsilon$ for all $t\in T(\eta)$. We say that $\varphi$ has the \textit{specification property} if for any $\epsilon>0$ there exists an $M=M_{\epsilon}>0$ such that any $M$-spaced specification $S$ is $\epsilon$-shadowed by a point $x\in M$. The Specification Theorem [\cite{fisher2019hyperbolic}, Theorem 5.3.61] says that if $K$ is a topologically transitive compact locally maximal hyperbolic set for a flow $\varphi$, then the restriction flow $\varphi|_{K}$ has the specification property. 
    
    For any element $y$ of a $(\tau,\epsilon)$-separated set $E_{\tau}$, we consider the specification $S=(\{I_1,I_2,I_3\}, P)$, where $I_1=\{0\}$, $I_2 = [M_{\epsilon/2},\tau + M_{\epsilon/2}]$ and $I_3=\{\tau + 2M_{\epsilon/2}\}$, and $P(0) = q$, $P(t) = \varphi^{t - M_{\epsilon/2}}(y)$ for $t\in I_2$ and $P(\tau + 2M_{\epsilon/2}) = p$. Then by the Specification Theorem, there exists a point $x\in K$ that $\epsilon$-shadows $S$. From the proof of the Specification Theorem, we obtain $x$ in such a way that $\mathcal{S}^{\tau + 2M_{\epsilon/2}}(x)$ is a $(q,p,\tau +2M_{\epsilon/2})$-chord. Therefore we conclude that $N(q,p,\tau + 2M_{\epsilon/2},\delta) \geq |E_n|$, and again by the formulation of $\tope{\varphi|_K}$ using the the exponential growth of the number of separated sets, it is easy to see that \eqref{eq: inq exp growth nbm per orb bound by ent 2} holds. 
\end{proof}

\subsection{Crossing Energy and proof of Theorem B} In this subsection, we state the Crossing Energy Theorem with Boundary Conditions and provide a proof of Theorem B. From now on, we adopt the convention that all considered Hamiltonians are convex semi-admissible unless otherwise stated. 

For the next theorem, we fix asymptotically conical Lagrangians $L_0$ and $L_1$ in $M$, with $ \partial L_0 =\Lambda_0$ and $\partial L_1 = \Lambda_1$ Legendrians in $\Sigma$.
\begin{theorem}[Crossing Energy Theorem with Boundary Conditions] \label{thm: crossing energy theorem}
Let $K$ be a locally maximal, topologically transitive compact hyperbolic set of $\alpha$. Assume that $W^u_{\delta}(q) \subset \Lambda_0$ and $W^s_{\delta}(p) \subset \Lambda_1$, for some $q,p \in K$, and $\delta>0$. Fix an interval 
$I = [r_-, r_+] \subset (1,r_{max})$,
and let $H(r,x)= h(r)$ be a semi-admissible Hamiltonian with $slope(H):= a \notin \mathcal{S}(\alpha)$ such that
$$h^{'''}\geq 0 \text{ on } [1,r_+ + \delta],$$
for some $\delta$ with $r_+ + \delta < r_{max}$. Fix an admissible almost complex structure $J$. Furthermore, let $z$ be a $(q,p,\delta)$-chord of $\alpha$ with length $T$, so that the corresponding $1$-chord $\Tilde{z} = (r^*,z)$ of $\tau H$ has $r^* \in I$ (Hence $\tau a \geq T$ and $r^*$ depends on $\tau$).

Then there exists $\sigma>0$ such that $E(u) \geq \sigma$, independent of $\tau$ and $z$, for any non-trivial $(E(u)\neq 0)$ Floer strip $u: \mathbb{R}\times [0,1] \rightarrow \widehat{M}$ of $\tau H$ asymptotic, at either end to $\Tilde{z}$.
\end{theorem}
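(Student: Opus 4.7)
The plan is to argue by contradiction via a compactness analysis that reduces the problem to a statement about nearby Reeb trajectories in the hyperbolic set $K$. Suppose the conclusion fails. Then there exist sequences of slopes $\tau_n$, $(q,p,\delta)$-chords $z_n$ of length $T_n$, and nontrivial Floer strips $u_n : \mathbb{R} \times [0,1] \to \widehat{M}$ for the Hamiltonian $\tau_n H$, asymptotic at one end to $\tilde{z}_n = (r^*_n, z_n)$ with $r^*_n \in I$, and with $E(u_n) \to 0$. The goal is to extract a limit whose existence conflicts with the hyperbolic splitting at the endpoints $q,p \in K$.

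First, I would confine the strips to a thin cylindrical shell. The maximum principle \eqref{eq: maximum principle} together with the Bourgeois-Oancea estimate \eqref{eq: Bourgeois-Oancea} traps $u_n$ between the asymptotic levels of its two ends, while \eqref{eq: curve confined in the cilyndrical part} places its image in the cylindrical end $[1,\infty) \times \Sigma$ once $n$ is large enough. With this control in place I would invoke the relative Location Constraint Theorem stated and proved later in the section: it upgrades the above into a uniform confinement of $u_n$ to an arbitrarily thin shell around $\{r^*_n\} \times \Sigma$, provided $E(u_n)$ is small enough. The convexity assumption $h'''\geq 0$ on $[1,r_+ + \delta]$ is used here exactly as in \cite{cineli2023invariant, cineli2024barcode} to propagate the constraint across the whole strip.

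Second, with $u_n$ confined to a thin shell on which $X_{\tau_n H} = \tau_n h'(r) R_\alpha$, I would rescale the $t$-coordinate by the factor $\tau_n h'(r^*_n) \sim T_n$ and project to $\Sigma$. The rescaled strips solve a modified Floer equation whose leading term is governed by the Reeb flow $\varphi^t_\alpha$, with boundary on $\Lambda_0$ and $\Lambda_1$. Since $E(u_n)\to 0$ and the small-energy hypothesis rules out bubbling, Gromov compactness combined with the thin-shell confinement yields, along a subsequence, a $C^0_{\mathrm{loc}}$ limit whose $\Sigma$-projection is a Reeb chord segment obtained as a limit of suitable translates of the $z_n$. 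This limiting chord again begins on $W^u_\delta(q)$ and ends on $W^s_\delta(p)$, so it also lies in $K$ by local maximality.

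Finally, I would close the argument using hyperbolicity. Because $\delta < \delta^*(K)/2$ is below the expansivity constant, the exponential contraction on $\stable{p}$ and expansion on $\unstable{q}$ force any Reeb trajectory close to $z_n$ in a long time window and satisfying the boundary conditions to in fact coincide with $z_n$ itself. Tracking this rigidity along the linearized Floer equation, the transverse deviation of $u_n$ from the constant strip over $z_n$ must decay exponentially away from the asymptotic end, yielding a uniform lower bound $E(u_n) \geq \sigma > 0$ in terms of the hyperbolic constants $\lambda,c$ and the contact geometry of $\Lambda_0, \Lambda_1$, contradicting $E(u_n) \to 0$. The main obstacle I expect is the second step: controlling the rescaled Floer equation uniformly as $\tau_n \to \infty$ with $r^*_n$ possibly varying within $I$, which is precisely where the Location Constraint Theorem does the heavy lifting and where the uniformity over $z$ in the conclusion of the theorem comes from.
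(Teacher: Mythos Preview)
Your use of the Location Constraint Theorem to confine $u$ to a thin shell is correct and matches the paper. The gap is in what comes after.

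The paper does not argue by contradiction via Gromov compactness. It uses the uniform pointwise bound $||\partial_s u(s,t)|| < C_H\, E(u)^{1/4}$ of Remark~\ref{rmk: partial s u bounded by energy} directly: this makes every slice $u_s$ an $\eta$-pseudo-orbit of $X_H$ with $\eta$ small whenever $E(u)$ is small, uniformly in $s$, $t$, and $\tau$. After an $s$-dependent time reparametrization the $\Sigma$-projections $\gamma_s$ become $\eta$-pseudo-orbits of the Reeb flow on intervals $[0,\tau_s]$, with $\eta = C_H E(u)^{1/4}/h'(r_{min})$ and $r_{min}$ supplied by the Location Constraint Theorem. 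Your compactness step, by contrast, only yields $C^0_{\mathrm{loc}}$ convergence on the rescaled domain $\mathbb{R}\times[0,T_n]$; since $T_n$ may go to infinity this gives no control over the full slice and hence no information about the \emph{other} asymptotic end of $u_n$.

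The paper then closes the argument with the Shadowing Theorem for flows, applied to the entire continuous family $\{y_s\}_{s\in\mathbb{R}}$ of Reeb pseudo-orbits (each $\gamma_s$ extended by the genuine flow outside $[0,\tau_s]$). A connectedness argument keeps this family inside a fixed isolating neighborhood $U$ of $K$; shadowing produces a continuous family of genuine orbits in $K$, expansivity identifies the shadowing orbits at $s=\pm\infty$ with $x_\pm$, and transversality of $W^u_\delta(q)$ with $W^s_\delta(p)$ (so that $(q,p,\delta)$-chords are isolated) forces $x_-=x_+$, hence $E(u)=0$. Your third paragraph invokes expansivity but never supplies the shadowing step that places \emph{all} slices near orbits of $K$, and its last sentence is backwards as written: exponential decay of the deviation from the constant strip would give an upper bound on $E(u_n)$, not the claimed lower bound. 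The missing ingredient is precisely the Shadowing Theorem; it is what converts ``each slice is a pseudo-orbit near $K$'' into ``both asymptotic chords lie in $K$ and coincide,'' and does so uniformly in $\tau$.
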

\begin{remark} \label{rmk:lower bound on the energy for a perturbation}
    Fix $0<\sigma' < \sigma$ and $\tau \geq 0$. For a non-degenerate $C^{\infty}$ small perturbations $\Tilde{H}$ of $\tau H$, the $\tau$-chords $\Tilde{z}$ of $H$ from Theorem \ref{thm: crossing energy theorem} split into non-degenerate $1$-chords of $\Tilde{H}$ in a tubular neighborhood of $\Tilde{z}$. Then it follows from a suitable version of Gromov compactness that any Floer strip of $(\Tilde{H},L_0 \rightarrow L_1)$ asymptotic to theses chords at either end has energy at least $\sigma'$.
\end{remark}
The following proposition is a technical result in persistence modules that is crucial to the proof of Theorem B. The first version of this result was proved in \cite{cineli2021topological}, but the version we use here can be found in \cite{meiwes2018rabinowitz}. 
\begin{proposition}[Proposition 4.4, \cite{meiwes2024barcode}] \label{prop: bar in a isolated set}
Assume that $(C'_1,\partial_1 '),...,(C'_p,\partial_p'))$ are chain complexes with non-zero homology that are $\epsilon$-isolated in $(C,\partial)$, and for which $C_i' \cap C_j' = \{0\}$ if $i\neq j$. Then $b_{\epsilon}(C)\geq p/2$.
    
\end{proposition}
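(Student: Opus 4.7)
The plan is to invoke the normal-form (singular-value) decomposition for filtered chain complexes and to extract from each $(C_i', \partial_i')$ with non-trivial homology an independent contribution to a bar of length at least $\epsilon$. Recall that $(C, \partial)$ admits an action-adapted basis consisting of acyclic pairs $(y_k, x_k)$ with $\partial x_k = y_k$ and $a(y_k) \le a(x_k)$, giving finite bars $[a(y_k), a(x_k))$, together with survivor cycles $z_\ell$ with $\partial z_\ell = 0$, giving infinite bars. The number $b_\epsilon(C)$ equals the number of pairs with $a(x_k) - a(y_k) \ge \epsilon$ plus the number of $z_\ell$.

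For each $i \in \{1, \dots, p\}$ I would fix a cycle $w_i \in C_i'$ representing a non-zero class in $H(C_i', \partial_i')$, with $a(w_i)$ lying in the $\epsilon$-isolating action window of $C_i'$. Since $w_i$ is a cycle, its expansion in the normal-form basis involves only the $y_k$ and $z_\ell$ generators. The decisive step is to translate the definition of $\epsilon$-isolation into the statement that any pair $(y_k, x_k)$ for which $y_k$ appears non-trivially in $w_i$ must satisfy $a(x_k) - a(y_k) \ge \epsilon$; otherwise the chain $x_k$ would witness a bounding of a homologically non-trivial piece of $w_i$ with action cost less than $\epsilon$, contradicting the isolation of $C_i'$ from the remaining part of $C$. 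Contributions from $z_\ell$ are automatically infinite bars. Hence $w_i$ is supported, in the normal form, on cycle generators of bars of length at least $\epsilon$.

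The condition $C_i' \cap C_j' = \{0\}$ for $i \neq j$ together with $w_i \neq 0$ yields linear independence of $w_1, \dots, w_p$ in $C$, and therefore $p$ linearly independent contributions inside the span of long-bar cycle generators. A counting argument, matching each $w_i$ to either a birth or a death endpoint of some long bar and using that each bar carries at most two such endpoints, then delivers $b_\epsilon(C) \ge p/2$. The main obstacle will be the isolation-to-length translation outlined above: making rigorous the principle that any $\partial$-connection between $C_i'$ and its complement must pay an action cost of at least $\epsilon$. This is where one must carefully unpack the working definition of $\epsilon$-isolation and invoke the pairwise action separation of the isolating windows; it is the technical heart of the argument, while the combinatorics of bar endpoints that produces the factor $1/2$ is essentially bookkeeping once Step 2 is in place.
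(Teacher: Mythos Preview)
This proposition is not proved in the paper; it is quoted from \cite{meiwes2024barcode} (with an earlier variant in \cite{cineli2021topological}) and invoked as a black box in the proof of Theorem~B. There is thus no in-paper argument to compare against, so I assess your proposal on its own.

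There is a genuine gap in the step from ``$w_i$ is a cycle'' to ``the expansion of $w_i$ in the normal form of $(C,\partial)$ involves only the $y_k$ and $z_\ell$''. The hypothesis hands you a $\partial_i'$-cycle, not a $\partial$-cycle: the complex $(C_i',\partial_i')$ carries only the part of $\partial$ that stays inside $C_i'$, and $\epsilon$-isolation constrains the \emph{action gap} of the off-diagonal piece of $\partial$, not its vanishing. Concretely, let $C$ be generated by $a,b$ with actions $0$ and $\epsilon$ and $\partial b=a$; take $C_1'$ generated by $a$ and $C_2'$ generated by $b$, each with zero internal differential. Both are $\epsilon$-isolated with non-zero homology and $C_1'\cap C_2'=\{0\}$, so $p=2$, while $b_\epsilon(C)=1=p/2$. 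Here $w_2=b$ is a $\partial_2'$-cycle but $\partial b=a\neq 0$, so $w_2$ sits on the $x$-side of the unique normal-form pair, contradicting your expansion claim. The example also shows that the bound $p/2$ is sharp and explains the factor $1/2$: one $C_i'$ can witness the birth of a long bar and another its death. Your endpoint-pigeonhole in step~4 is the correct intuition, but steps~2--3 must be reworked so that each $w_i$ is allowed to land on either end of a bar rather than only on the cycle side.

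A secondary issue: pairwise $C_i'\cap C_j'=\{0\}$ does not force $w_1,\dots,w_p$ to be linearly independent (three distinct lines through the origin in a plane meet pairwise only at $0$). In the intended applications the $C_i'$ are spanned by disjoint subsets of a fixed basis and hence lie in direct sum; the arguments in the cited references use this, together with the action windows of the $C_i'$, to produce for each $i$ a birth or a death of a long bar inside that window, after which the factor $1/2$ is exactly the endpoint count you sketched.
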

\begin{proof}[Proof of Theorem B] Throughout the proof we fix $\delta<\delta^*/2$. Now Let $p(s) = N(q,p,\delta,s)$. From Proposition \ref{prop: top ent coincide growth of per orb}, we have
    $$L:= \tope{K} = \limsup_{s\rightarrow \infty} \frac{log^+p(s)}{s},$$
    since $K$ is a locally maximal, topologically transitive hyperbolic set. Fix $r_- \in (1,r^+)$, and set $I' = (r_-,r_+]$. Notice that in Theorem \ref{thm: crossing energy theorem}, $I = [r_-,r_+]$, and hence $I' \subset I$. 
    
    Let $p^H(s)$ be the number of $s$-chords $\Tilde{z}=(r',z)$ for $(H,L_0 \rightarrow L_1)$ with $z \in C(q,p,\delta)$ and $r'\in I'$. Equivalently, $p^H(s)$ is the number of $1$-chords $\Tilde{z}$ of $s H$ with $z \in C(q,p,\delta)$ and $r'\in I'$.

    We want to show that 
    \begin{equation} \label{eq:inequality on the growth of orbits}
        h'(r_+)\cdot L \leq \limsup_{s\rightarrow \infty} \frac{log^+p^H(s)}{s} \leq L.
    \end{equation}
    
    For the second inequality, notice that $p^H(s) \leq p(s)$, since there is a a one-to-one correspondence between chords $z\in C(q,p,\delta,s)$ (assume $slope(H)=1$) and the $s$-chords $\Tilde{z}=(r,z)$ of $(H,L_0\rightarrow L_1)$ with $z\in C(q,p,\delta,s)$.
    
    We now discuss the first inequality. Let $\Tilde{z} = (r^*,z)$ be an $\tau$-chord of $(H,L_0 \rightarrow L_1)$ and $T$ the length of the chord $z$ in $C(q,p,\delta,s)$. Then, $T$ and $r^*$ are related by the condition 
    $$sh^{'}(r^*) = T.$$
    Set 
    $$a_- := h^{'}(r_-) \text{ and } a_+:=h^{'}(r_+).$$
    Then, for $r^*$ to be in $I'=(r_-,r_+]$ we must have 
    $$sa_- < T \leq sa_+,$$
    and thus 
    \begin{equation} \label{eq:p^h using the interval bounds}
        p^H(s) = p(sa_+) - p(sa_-).
    \end{equation}
    (The reason that we took $I'$ to be an semi-open interval rather than the closed interval $I$ is to ensure that this equality holds literally). 
    We have
    $$p(sa_+) = (p(sa_+) - p(sa_-)) + p(sa_-) \\
     \leq \max\{2(p(sa_+) - p(sa_-)),2p(sa_-)\}.$$
     Therefore,
     \begin{align*}
         a_+\cdot L & = \limsup_{s\rightarrow \infty} \frac{\log^+ p(sa_+)}{s} \\
         & \leq \limsup_{s\rightarrow \infty} \frac{\log^+ \max\{2(p(sa_+) - p(sa_-)),2p(sa_-)\}}{s} \\
         & \leq \limsup_{s\rightarrow \infty} \frac{\max\{ \log^+(2(p(sa_+) - p(sa_-))), \log ^+2p(sa_-) \}}{s} \\
        & \leq \max\left\{ \limsup_{s\rightarrow \infty} \frac{\log^+(2(p(sa_+) - p(sa_-)))}{s}, \limsup_{s\rightarrow \infty} \frac{\log^+2p(sa_-)}{s} \right\} \\
         & \leq \max\left\{ \limsup_{s\rightarrow \infty} \frac{\log^+p^H(s)}{s}, a_- \cdot L\right\}.
     \end{align*}
     Here, on the last equality we used \eqref{eq:p^h using the interval bounds}. The second term in the last line is strictly smaller than $a_+L$ since $a_- < a_+$, and hence the first term must be greater than or equal to $a_+ L$. This proves the first inequality in \eqref{eq:inequality on the growth of orbits}.
     
     Now, from Remark \ref{rmk:lower bound on the energy for a perturbation}, for a $C^{\infty}$-small perturbation $\Tilde{H}$ of $sH$, the $s$-chords $\Tilde{z} = (r^*,z)$ of $(H,L_0 \rightarrow L_1)$ with $r^* \in I'$ split into non-degenerate $1$-chords of $(\Tilde{H},L_0 \rightarrow L_1)$, 
     and the Floer strips asymptotic to these orbits at either end have energy at least a chosen $\sigma'$, with $0< \sigma ' < \sigma$, independent of $s$ and $\Tilde{z}$ (the size of the perturbation $||sH -\Tilde{H}||_{C^{\infty}}$ may depend on $s$). Note that $sA_h(r_+) < sB - \epsilon$ for $s$ sufficiently large, thus the action of all such chords for $(\Tilde{H},L_0 \rightarrow L_1)$   is below $sB-\epsilon$.

   
     Now let $2\epsilon< \sigma'$. Then, from Proposition \ref{prop: bar in a isolated set}, it follows that the persistence module $\tau \rightarrow WF^{\tau}(\Tilde{H})$ has at least $p^H(s)/2$ bar of length at least $2\epsilon$, i.e., $\text{\textcrb}_{2\epsilon}(\Tilde{H},sB -\epsilon) \geq p^H(s)/2$. Since $||sH - \Tilde{H}||_{C^{\infty}}$ is sufficiently small, then from Proposition 2  in \cite{fernandes2024barcode}, we have that $\text{\textcrb}_{\epsilon}(sH,sB - \epsilon) \geq p^H(s)/2$ (we choose the perturbation $\Tilde{H}$ after $\epsilon$ is fixed). Therefore,
     from Proposition 4 in \cite{fernandes2024barcode} and \eqref{eq:inequality on the growth of orbits}, we obtain
     \begin{align*}
          \be{M,L_0 \rightarrow L_1} & \geq \limsup_{s \rightarrow \infty} \frac{\log^+ \text{\textcrb}_{\epsilon}(sH,sB - \epsilon)}{s} \\
         & \geq \limsup_{s\rightarrow \infty} \frac{\log^+ p^H(s)}{s} \\
         & \geq h'(r_+) \cdot \tope{K} \\
         & \geq (1-\eta) \cdot \tope{K}.
     \end{align*}
     Since we can choose $\eta$ arbitrarily small, we conclude \ref{eq:barcode entropy bounded by below}.
    
\end{proof}

\subsection{Location Constraint and proof of Crossing Energy Theorem} 


Throughout this subsection will be convenient to adopt a different point of view for the filtered wrapped Floer homology of $\tau H$, where $H$ is a Hamiltonian with $slope(H)=1$. Instead of looking at the $1$-chords of $\tau H$, we will consider the $\tau$-chords of $H$. This modification does not affect the ingredients that go into the set up of the wrapped Floer homology groups, i.e., it does not affect the Floer complex, the action and action filtration, the energy of Floer strips and so on. In this setting, we will refer to the filtered Floer homology of $\tau H$ as the filtered Floer homology of $H^{\sharp \tau}$. Furthermore, we will assume that all the Floer strips have energy small enough so that their image is contained in $[1,\infty)\times \Sigma$, as in \eqref{eq: curve confined in the cilyndrical part} (all the Floer strips considered here are asymptotic at either end to a chord that lives in a level $r\geq r_0>1$, for some fixed $r_0$).

\begin{remark} \label{rmk: partial s u bounded by energy}
    From the work in \cite{salamon1990morse} and [\cite{salamon1999lectures}, sec 1.5], there exist constants $C_H>0$ and $\sigma_H>0$ such that for any Floer strip $u:\mathbb{R}\times [0,\tau] \rightarrow \widehat{M}$ for $H^{\sharp \tau}$ with $E(u)<\sigma_H$,
\begin{equation} \label{eq: curve pointwisely bounded by energy}
    ||\partial_s u(s,t)|| < C_H \cdot E(u)^{1/4},
\end{equation}
where the $(s,t) \in \mathbb{R}\times [0,\tau]$ is the coordinates, and the norm on the left is the $L^{\infty}$; see also \cite{bramham2015pseudo}. The constants $\sigma_H$ and $C_H$ depend on $J$, and the first and second derivatives of $H$, but not on $\tau$ or $u$. This is one instance where it is convenient to work with $H^{\sharp \tau}$ instead of $\tau H$.  
\end{remark}
The following theorem is not only essential for proving the Crossing Energy Theorem but is also of independent interest. It can be viewed as a relative version of Theorem 6.1 in \cite{cineli2023invariant}.
\begin{theorem}[Location Constraint Theorem with Boundary Conditions] \label{thm: location constraint}
Let $H(r,x) = h(r)$ be a semi-admissible Hamiltonian. Assume that $1<r_*^- \leq r_*^+$ and $\delta>0$ are such that 
$$1<r_*^- - \delta \text{ and } r_*^+ + \delta < r_{max},$$
and
$$h^{'''}\geq 0 \text{ on } [1,r_*^+ + \delta].$$  
Fix an admissible almost complex structure $J$. There exists $\sigma_0 > 0$ such that for any Floer strip $u: \mathbb{R} \times [0,\tau] \rightarrow \widehat{M}$ satisfying Floer's equation \eqref{eq: Floer equation with boundary constraint} for $(H^{\sharp \tau}, L_0 \rightarrow L_1)$ and any $\tau>0$, with energy $E(u)<\sigma_0$ and asymptotic to Hamiltonian chords at both ends, one of which, at either end, is in $[r_*^- , r_*^+] \times \Sigma$, has image contained in $(r_*^- - \delta, r_*^+ + \delta)\times \Sigma$.
\end{theorem}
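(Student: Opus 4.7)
The plan is to argue by contradiction. Suppose no such $\sigma_0$ exists; then one obtains a sequence of Floer strips $u_n \colon \mathbb{R}\times[0,\tau_n]\to\widehat{M}$ for $(H^{\sharp\tau_n}, L_0 \to L_1)$ with $E(u_n)\to 0$, each asymptotic at $\mp\infty$ to Hamiltonian chords at levels $r_n^\pm$ with at least one of $r_n^\pm$ in $[r_*^-, r_*^+]$, yet each $u_n$ has image leaving $(r_*^- - \delta, r_*^+ + \delta)\times\Sigma$. The first step is to pin both asymptotes into the target region: by the action--energy identity $E(u_n) = A_h(r_n^+) - A_h(r_n^-)$ and the bound $A_h'(r) = r h''(r) \geq c > 0$ on $[r_*^- - \delta/2, r_*^+ + \delta/2]$ (from strict convexity of $h$), $A_h$ is bi-Lipschitz on this interval, so both $r_n^\pm$ lie in $[r_*^- - \delta/4, r_*^+ + \delta/4]$ for large $n$. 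The maximum principle \eqref{eq: maximum principle} then yields $r\circ u_n \leq r_n^+ \leq r_*^+ + \delta/4$, ruling out upper escape.

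The substantive task is to rule out lower escape. For this I would derive an elliptic inequality for $f := r \circ u_n$ on the full strip. Using the Floer equation, the admissibility of $J$ (giving $dr \circ J = -r\alpha$ and $J\xi = \xi$), and $X_H = h'(r) R_\alpha$, a direct calculation produces $\partial_s f = -r\alpha(\partial_t u) + r h'(r)$ and $\partial_t f = r\alpha(\partial_s u)$, and then, using $d(u^*\alpha) = u^*d\alpha$ and the non-negativity of $d\alpha(w, Jw)$ for the $\xi$-component $w$ of $\partial_s u$,
$$\Delta f \;=\; \|\partial_s u\|^2 \;+\; r h''(r)\, \partial_s f.$$
Thus $f$ is a subsolution of the linear elliptic operator $L := \Delta - r h''(r)\partial_s$, whose first-order coefficient $r h''(r) \geq 0$ is, by the hypothesis $h''' \geq 0$ on $[1, r_*^+ + \delta]$, non-decreasing in $r$. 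On the Lagrangian boundaries, the conical structure $\widehat{L}_i = [1,\infty) \times \Lambda_i$ gives $\partial_s u_n(s,0),\, \partial_s u_n(s,\tau_n) \in \mathbb{R}\partial_r \oplus \xi$, so $\alpha(\partial_s u_n) = 0$ there, and the identity for $\partial_t f$ yields the Neumann-type condition $\partial_t f = 0$ at $t = 0$ and $t = \tau_n$.

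To convert this setup into a lower bound I would combine the elliptic structure and Neumann boundary with the Bourgeois--Oancea inequality \eqref{eq: Bourgeois-Oancea}, which once both asymptotes are pinned gives $\max_t f(s,\cdot) \geq r_*^- - \delta/4$ for every $s$, and the $L^\infty$-gradient estimate $\|\partial_s u_n\|_\infty \leq C_H E(u_n)^{1/4}$ of Remark \ref{rmk: partial s u bounded by energy}. If a lower-escape point $(s_n, t_n)$ with $f(s_n, t_n) \leq r_*^- - \delta$ existed, then on the slice at $s_n$ the oscillation of $f$ in $t$ would be at least $3\delta/4$, and the plan is to play this forced oscillation against the elliptic inequality $Lf \geq 0$, its Neumann barrier on $t = 0, \tau_n$, and the asymptotic values pinned in the first step, to force a quantitative lower bound on $E(u_n)$ that is independent of $\tau_n$ and contradicts $E(u_n)\to 0$.

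The central difficulty is precisely this uniformity in $\tau_n$, since the strip width is arbitrary and no estimate can afford to degrade as $\tau_n \to \infty$. Two ingredients are essential. First, working with $H^{\sharp\tau}$ rather than $\tau H$ decouples the constants $C_H, \sigma_H$ in the pointwise gradient bound from $\tau$. Second, the PDE analysis of $f$ through the operator $L$ on the full strip, combined with the Bourgeois--Oancea slice-max constraint and the Neumann Lagrangian boundary, promotes what would otherwise be slicewise (and thus $\tau$-sensitive) oscillation bounds into global statements; the monotonicity of $r h''(r)$ supplied by $h''' \geq 0$ is what lets these local-to-global arguments propagate consistently across the interval $[1, r_*^+ + \delta]$. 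The delicate interplay between these ingredients—rather than any single one in isolation—is what produces the $\tau$-uniform lower bound on $E(u_n)$ that yields the contradiction.
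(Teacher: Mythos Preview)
Your setup is sound up to the point where you have ruled out upward escape and identified that the whole problem is the $\tau$-uniform lower bound on $r\circ u$. But from that point on the proposal is a plan, not a proof, and the plan has a structural problem: the inequality $Lf \geq 0$ with $L=\Delta - rh''(r)\partial_s$ is a \emph{sub}solution statement, so together with the Neumann condition it delivers a maximum principle, i.e.\ exactly the upper bound you already have from \eqref{eq: maximum principle}. It says nothing about interior minima. Your remaining tools are Bourgeois--Oancea \eqref{eq: Bourgeois-Oancea} and the pointwise gradient bound; combining them directly gives $\min_t r(u_s)\geq r^- - C\sqrt{r^+}\,\tau\,E(u)^{1/4}$, which degenerates as $\tau\to\infty$. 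Your final paragraph asserts that the elliptic structure ``promotes'' slicewise oscillation bounds to global ones and that the monotonicity of $rh''(r)$ ``lets these local-to-global arguments propagate,'' but no mechanism is given, and I do not see one along these lines.

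The paper closes this gap by an argument you do not mention. It first proves a differential inequality for the slice integral,
\[
\frac{d}{ds}\int_0^\tau r(u_s)\,dt \;\leq\; -\tau A_h(r^-) + \int_0^\tau A_h(r(u_s))\,dt,
\]
and uses the hypothesis $h'''\geq 0$ through the monotonicity of $A_h(r)/r$ (not of $rh''(r)$) to get the $\tau$-uniform lower bound $\int_0^\tau r(u_s)\,dt \geq \tau r^+ - r^+E(u)/A_h(r^+)$. This in turn bounds the \emph{Lebesgue measure} $\mu(s,\rho)$ of the time a slice spends below level $r^+-\rho$ by $r^+E(u)/(\rho\,A_h(r^+))$, independently of $\tau$. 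Only then does the gradient estimate enter: if the slice dips to $r^- - \eta$, Bourgeois--Oancea forces it to climb back to $r^-$, and the gradient bound converts that climb into a lower bound on the measure $\mu$, contradicting the upper bound just obtained. The quantitative outcome is $r^- - \inf r(u) \leq C (r^+)^{3/4}E(u)^{5/8}/\sqrt{A_h(r^+)}$, from which the theorem follows immediately. The measure argument is the missing idea; without it, or something equivalent, your outline does not produce a $\tau$-independent bound.
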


Theorem \ref{thm: location constraint} is a consequence of the following result:

\begin{theorem} \label{thm: lower bound for the r coordinate of a curve}
    Let $H(r,x) =h(r)$ be a semi-admissible Hamiltonian. Fix an admissible almost complex structure $J$. Then there exist $\epsilon>0$ and $C>0$ such that for any $\tau \geq 1$, and any $u:\mathbb{R}\times [0,\tau] \rightarrow \widehat{M}$ Floer strip for ($H^{\sharp \tau}, L_0 \rightarrow L_1$) with $E(u)<\epsilon$, contained in a region where $h''' \geq 0$ and $r^{\pm} = r(u(\pm \infty,t))$, we have
    \begin{equation} \label{eq:lower_bound_for_r_coordinate}
        \inf_{\mathbb{R} \times [0,\tau]} r(u(s,t)) \geq r^- - \frac{C\cdot(r^+)^{3/4} E(u)^{5/8}}{\sqrt{A_h(r^+)}},
    \end{equation}
    where $A_h(r^+) = r^+ h'(r^+) - h(r^+)$.
\end{theorem}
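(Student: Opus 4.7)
The plan is to combine pointwise and $L^2$ gradient bounds on the function $\rho(s,t) := r(u(s,t))$ with the Bourgeois--Oancea inequality \eqref{eq: Bourgeois-Oancea}. First, Remark \ref{rmk: partial s u bounded by energy} gives the pointwise bound $\|\partial_s u(s,t)\| \leq C_H E(u)^{1/4}$, uniform in $\tau$. From the Floer equation together with $dr(X_H) = 0$ and the identity \eqref{eq: relation between dr and alpha} one gets $\partial_s \rho = dr(\partial_s u)$ and $\partial_t \rho = \rho\, \alpha(\partial_s u)$. Combined with the Cauchy--Schwarz bound $|\alpha(\partial_s u)| \leq \|\partial_s u\|/\sqrt{\rho}$ in the cylindrical metric and the maximum principle $\rho \leq r^+$, this yields the pointwise estimate $|\nabla \rho| \leq C\sqrt{r^+}\, E(u)^{1/4}$.

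Second, decomposing $T\widehat{M}$ on the cylindrical end orthogonally as $\mathbb{R}\partial_r \oplus \mathbb{R}R_\alpha \oplus \xi$, the energy density splits as
\[
\|\partial_s u\|^2 = \frac{(\partial_s \rho)^2}{\rho} + \frac{(\partial_t \rho)^2}{\rho} + \|\partial_s u^\xi\|^2,
\]
which, together with $\rho \leq r^+$, yields the $L^2$ bound
\[
\int_{\mathbb{R}\times[0,\tau]} (\partial_t \rho(s,t))^2 \, ds\, dt \leq r^+ E(u).
\]

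The Bourgeois--Oancea inequality supplies, for each $s$, a time $t_s \in [0,\tau]$ with $\rho(s,t_s) \geq r^-$. To bound $\rho(s_1, t_1)$ at a point near the infimum from below, my plan is to compare it to $\rho(s', t_{s'}) \geq r^-$ along a two-segment path. First, move horizontally from $s_1$ to a slice $s' \in [s_1 - \delta, s_1 + \delta]$ chosen via Chebyshev applied to the above $L^2$ bound, so that $\int_0^\tau (\partial_t \rho(s', t))^2 \, dt \leq r^+ E(u)/\delta$; the horizontal contribution is then at most $\delta C\sqrt{r^+}\, E(u)^{1/4}$ by the pointwise bound. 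Next, move vertically along the slice $s'$ from $t_1$ to $t_{s'}$; by Cauchy--Schwarz, the vertical contribution is at most $\sqrt{\tau r^+ E(u)/\delta}$. Summing and optimizing over $\delta$ produces a lower bound on $\rho(s_1, t_1)$, hence on $\inf_{(s,t)} r(u(s,t))$.

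The main obstacle is getting the sharp exponent $E^{5/8}$ together with the prefactor $(r^+)^{3/4}/\sqrt{A_h(r^+)}$. The naive two-segment optimization above yields only an estimate of order $\tau^{1/3} E(u)^{5/12} \sqrt{r^+}$, which is weaker and retains an unwanted $\tau$-dependence. I expect the sharp bound \eqref{eq:lower_bound_for_r_coordinate} to require exploiting the hypothesis $h''' \geq 0$ more substantively---for instance through a weighted energy estimate whose weight is tied to $h''$, making the action-energy identity $E(u) = \tau(A_h(r^+) - A_h(r^-))$ enter directly---and thereby trading the $\tau$-dependence for the factor $1/\sqrt{A_h(r^+)}$. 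This refinement is the crux of the argument; Theorem \ref{thm: location constraint} should then follow by applying the resulting estimate to both ends of the strip.
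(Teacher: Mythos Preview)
Your proposal correctly isolates two of the three ingredients---the pointwise bound $|\partial_t \rho| \leq C\sqrt{r^+}\,E(u)^{1/4}$ (this is exactly Claim~2 in the paper's proof) and the Bourgeois--Oancea inequality---but, as you yourself acknowledge, it does not close. The missing piece is not a weighted energy estimate; it is an \emph{action-based sublevel-set bound} that replaces your $L^2$/Chebyshev step entirely and removes the $\tau$-dependence.

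Here is the mechanism the paper uses. Writing $\rho = r(u)$ and using $dr\circ J = -r\alpha$, one computes
\[
\partial_s \rho \;=\; -\bigl[\rho\,\alpha(\partial_t u) - h(\rho)\bigr] + A_h(\rho),
\]
and integrating over $t\in[0,\tau]$ gives
\[
\frac{d}{ds}\int_0^\tau \rho(s,t)\,dt \;=\; -\mathcal{A}_{H^{\sharp\tau}}(u_s) + \int_0^\tau A_h(\rho(s,t))\,dt \;\leq\; -\tau A_h(r^-) + \int_0^\tau A_h(\rho(s,t))\,dt.
\]
Now the hypothesis $h'''\ge 0$ enters: it forces $A_h(r)/r$ to be nondecreasing, so by the maximum principle $A_h(\rho)\le (A_h(r^+)/r^+)\,\rho$. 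Evaluating at any critical point of $s\mapsto\int_0^\tau\rho(s,t)\,dt$ (and checking $s=\pm\infty$ directly) yields, after rewriting via $E(u)=\tau(A_h(r^+)-A_h(r^-))$,
\[
\int_0^\tau \rho(s,t)\,dt \;\geq\; \tau r^+ - \frac{r^+}{A_h(r^+)}\,E(u)\qquad\text{for every }s.
\]
Combined with the trivial upper bound $\int_0^\tau\rho\le \tau r^+$, this controls the sublevel measure $\mu(s,\rho_0) := \mathrm{Leb}\{t: \rho(s,t)\le r^+ - \rho_0\}$ by
\[
\mu(s,\rho_0)\,\rho_0 \;\leq\; \frac{r^+}{A_h(r^+)}\,E(u).
\]
This is what replaces your Chebyshev step; note it holds on \emph{every} slice, so no horizontal move is needed at all. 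On a slice $s_0$ where $\rho$ nearly attains its infimum $r^- - \eta$, your pointwise gradient bound together with Bourgeois--Oancea force $\mu(s_0,\rho_0)\ge \eta/(2C\sqrt{r^+}\,E^{1/4})$ with $\rho_0 = r^+ - r^- + \eta/2 \ge \eta/2$. Multiplying the two displayed inequalities gives $\eta^2 \lesssim (r^+)^{3/2}E^{5/4}/A_h(r^+)$, which is \eqref{eq:lower_bound_for_r_coordinate}. Your $L^2$ energy decomposition is never used.
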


\begin{proof}[Proof of Theorem \ref{thm: location constraint}]
    Let $\epsilon>0$ as in Theorem \ref{thm: lower bound for the r coordinate of a curve}. Choose $0<\sigma' < \epsilon$ such that $r^+ - r^- < \delta/2$, for any $\tau > 0$, and any Floer strip $u: \mathbb{R} \times [0,\tau] \rightarrow \widehat{M}$ of $H^{\sharp \tau}$ with $E(u)< \sigma '$, which is asymptotic to a chord in $[r_*^-, r_*^+] \times \Sigma$ at at least one of the ends $u(\pm \infty, t)$. Such choice of $\sigma '$ exists since $A_h$ is continuous and increasing ($A'_h(r)= rh''(r)>0$ in $(1,r_{max})$). Now, we take $\sigma < \sigma'$ satisfying 
    $$\sigma^{5/8} < \frac{\delta \sqrt{A_h(r^-_*)}}{2C(r_*^+)^{3/4}},$$
    or equivalently 
    $$\frac{C(r^+_*)^{3/4}\sigma^{5/8}}{\sqrt{A_h(r_*^-)}}< \delta/2,$$
    and apply Theorem \ref{thm: lower bound for the r coordinate of a curve} to $E(u)< \sigma$. Then it follows from \eqref{eq:lower_bound_for_r_coordinate} and the maximum principle \eqref{eq: maximum principle} that the image of $u$ is contained in $(r_*^- - \delta,r_*^+ + \delta)\times \Sigma$
\end{proof}

The proof of Theorem \ref{thm: lower bound for the r coordinate of a curve} is highly technical and not particularly enlightening, so we defer it to the end of this section. In what follows, we present the proof of the Crossing Energy Theorem.

\begin{proof}[Proof of Theorem \ref{thm: crossing energy theorem}] Throughout the proof, we assume $E(u)<\sigma_H$, so $u$ satisfies \eqref{eq: curve pointwisely bounded by energy}. \\

\textbf{Step 1:} A maps $\gamma$ from a interval is called a \textit{$\eta$-pseudo-orbit} or sometimes refereed just as \textit{pseudo-orbit} for the flow of a vector field $X$ if 
$$||\dot{\gamma}(t) - X(\gamma(t))||<\eta,$$
for all $t$ in the domain of $\gamma$. Then, if $\eta$ is small enough, $\gamma$ is close to the integral curve o the flow starting at $x=\gamma(0)$. More precisely, whenever the closed interval $\mathcal{I}$ in the domain of $\gamma$ is fixed, and $\eta$ is sufficiently small, $\gamma|_{\mathcal{I}}$ is point-wise close to the integral curve of the flow starting at $x$ on the same interval $\mathcal{I}$.

From Inequality $\eqref{eq: curve pointwisely bounded by energy}$ and the Floer equation \eqref{eq: Floer equation with boundary constraint}, the curves $u_s$ given by
$$u_s(t) = u(s,t),$$
are $\eta$-pseudo-orbits of $X_H$ on the interval $[0,\tau]$, with $\eta = C_H\cdot E(u)^{1/4}$. Thus, when $E(u)$ is small enough, each $u_s$ approximates the integral curve of $X_H$ on the interval $[0,\tau]$.

On this step we would like to say that the projections of $u_s$ to $\Sigma$ are $\eta$-pseudo-orbits for the Reeb flow, with a potentially different (but still controlled) $\eta$.

Let $u$ be a Floer strip for $H^{\sharp \tau}$ with $E(u)<\sigma_H$, and assume that $\inf r(u) \geq r_{min}$, with $r_{min}>1$ given by Theorem \ref{thm: location constraint}. Notice that from the maximal principal, the image of $u$ is contained in $\Sigma\times [r_{min},r_{max}]$. Set $\epsilon = E(u)^{1/4}$, and $C=C_H$. Then, from \eqref{eq: curve pointwisely bounded by energy}, we have that
$$||\partial_t u - X_H(u)||< C \epsilon,$$
for all $s\in \mathbb{R}$. Since $X_H = h'R_{\alpha}$ on each level $\{r\}\times \Sigma$, for $r \geq 1$, by denoting by $v$ the projection of $u$ to $\Sigma$, we obtain that
\begin{equation}
    \label{eq: projections as pseudo reeb orbits}
    ||\partial_t v - h'(r(u))R_{\alpha}(v)|| < C \epsilon.
\end{equation}

For each $s \in \mathbb{R}$, we reparameterize the map $u_s$ on the interval $[0,\tau]$ by using the change of variable $t=t(\xi)$ satisfying $\xi(0)=0$ and
$$t'(\xi) = \frac{1}{h'(r(u(s,t)))}>0,$$
and set 
$$\gamma_s(\xi) = u_s(t(\xi))$$
defined on the interval $[0,\tau_s]$, where 
$$\tau_s = \int_0^{\tau} h'(r(s,t))dt.$$
Now, we notice that 
$$||\dot{\gamma}_s(\xi) - R_{\alpha}(\gamma_s(\xi))||<\frac{C\epsilon}{\min\limits_{t\in[0,\tau]}h'(r(s,t))} \leq \frac{C\epsilon}{h'(r_{min})}:=\eta,$$
where the dot represents the derivative with respect to $\xi$. Therefore, we conclude that $\gamma_s$ are $\eta$-pseudo-orbits for the Reeb flow, where $\eta$ can be taken depending just on $H$ but independently of $s$ and $u$ (as long as the energy of $u$ is sufficiently small). \\

\textbf{step 3:} Let $K$ a locally maximal, topologically transitive compact hyperbolic set for the Reeb flow. Consider $U'=U(K)$ a isolating neighborhood for $K$ given by the Shadowing Theorem for flows [\cite{katok1995introduction}, Theorem 18.1.7]. Fix $\delta>0$ such that 
\begin{enumerate}
    \item $U = N_{\delta} := \{y \in M\ \mid \inf\limits_{x\in K}d(y,x) \leq \delta\} \subset U'$,
    \item $\delta < \delta^*/2$, where $\delta^*$ is the expansive constant for $K$,
    \item $\delta < \delta '/2$, where $\delta' = d(\partial U',K)>0$.
\end{enumerate}

Then, from the Shadowing Theorem for flows, for $\delta>0$, there exists an $\eta>0$ such that any continuous family of $\eta$-pseudo-orbits $\mathcal{Q}_s$ in $U$, with $s\in \mathbb{R}$, is $\delta$-shadowed by a continuous family of orbits $(\mathcal{O}(x^s))_{s\in \mathbb{R}}$. Moreover, by our choice of $\delta$ and the local maximality of $K$, we have that the orbits $\mathcal{O}(x^s)$ are in $K$, for all $s \in \mathbb{R}$, and that any $\eta$-pseudo-orbit  that intersects $\partial U$ is not entirely contained in $\overline{U}$.

We point out that to finish the proof, it is sufficient to show that there exists an $0<\epsilon<\sigma_H$ such that for any $u \in \mathcal{M}(\hat{x}_-, \hat{x}_+, H^{\sharp \tau},J)$, with $\hat{x}_- = (r^-,x_-)$ and $\hat{x}_+ = (r^+,x_+)$, where $x_-$ or $x_+$ are $(q,p,\delta)$-chords in $K\times I$, and $E(u)<\epsilon$, then $E(u) = 0$. Without loss of generality, we assume that $x^+$ is a $(q,p,\delta)$-chord and $r^+ \in I$. Set $y^s:= v(s,0) \in \Lambda_0$, and consider the family 
\begin{center}
      $y_s(\xi) :=
    \begin{cases}
        \gamma_s(\xi) & \text{for $\xi \in [0,\tau_s]$}, \\
        \varphi^{\xi}_{\alpha}(y^s) & \text{for $\xi<0$}, \\
        \varphi^{\xi - \tau_s}(y_s(\tau_s)), & \text{for $\xi > \tau_s $}.
    \end{cases}$
\end{center}
From step 2, $\mathcal{Q}_s =\{y_s\}_{s\in \mathbb{R}}$ is a continuous family of pseudo orbits for the Reeb flow, with $y_s  \rightarrow x_+$. Since $x_+$ is a $(q,p,\delta)$-orbit, then we conclude that $y_s(0) \rightarrow q \in \Lambda_0$, and $y_s(\tau_s) \rightarrow p \in \Lambda_1$. Moreover, $y_s(0 ) \in W^u_{\delta}(q)$ and $y_s(\tau_s) \in W^s_{\delta}(p)$ for $s$ sufficiently large. From our choice of $\delta$, it follows that $y_s$ is in $U$ for all $s$ sufficiently large. Indeed $y_s$ are in $U$ for all $s \in \mathbb{R}$, otherwise, by taking $s_0 = \inf\{s \in \mathbb{R} \mid y_s \subset U\}$, we conclude that $y_{s_0}$ intersects $\partial U$, which contradicts our choice of $\delta$ since $y_{s_0}$ would need to escape $\overline{U}$. In particular, we conclude that $x_-$ is in $K$, since $y_s \rightarrow x_-$, as $s \rightarrow -\infty$. Now, from the Shadowing Theorem, we obtain a family $\mathcal{O}(x_s)_{s\in \mathbb{R}}$ of orbits of the Reeb flow $K$ that $\delta$-shadows the pseudo orbits $\mathcal{Q}_s$. Moreover, as it is easy to verify, the orbit segments $S^{\Tilde{\tau}_s}(x_s)$ form a family of $(q,p,2\delta)$-chords for $s$ sufficiently large. Since $2\delta$ is an expansivity constant, we get that $x_s = x_+$ and $x_{-s} = x_-$, for all $s$ sufficiently large. Finally, since $\varphi(W^u_{\delta}(q))$ intersects $\varphi(W^s_{\delta}(p))$ transversely, we conclude that the $(q,p,\delta)$-chords are isolated and therefore $x_-=x_+$, which implies that $E(u) = 0$. 
\end{proof}

\begin{proof}[Proof of Theorem \ref{thm: lower bound for the r coordinate of a curve}] The proof will be divided in four steps. \\

\textbf{Step 1:} Fix a admissible Hamiltonian $H(r,x)=h(r)$, and let $u$ a Floer strip for $H^{\sharp \tau}$. Denote by $u_s : = u(s,.): [0,\tau] \rightarrow \widehat{M}$ the $s$-slice of the strip $u$. On this step, we want to show that 
\begin{equation}\label{eq:variation of r(u_s) is bounded by above}
\frac{d}{ds}\int_0^{\tau}r(u_s)dt \leq -\tau A_h(r^-) + \int_0^{\tau} A_h(r(u_s))dt.
\end{equation}
Notice that from \eqref{eq: relation between dr and alpha} and \eqref{eq: gradient equation}, we have
\begin{align*}
    \partial_s(r(u)) & = dr(\partial_s u) \\
    & = dr(J(\partial_t u - X_H(u))) \\ 
    & = -r(u)\alpha(\partial_t u ) + r(u)\alpha(X_H(u)) \\
    & = -r(u)\alpha(\partial_t u) + r(u)h'(r(u)) \\ 
    & = -[r(u)\alpha(\partial_t u) - h(r(u))] + [r(u)h'(r(u)) - h(r(u))].
\end{align*}
Integrating over the interval $[0,\tau]$, we obtain 
$$\frac{d}{ds} \int_0^{\tau} r(u_s) dt = - \mathcal{A}_{H^{\sharp \tau}}(u_s) + \int_0^{\tau} A_h (r(u_s))dt,$$
and since from \eqref{eq: gradient equation} the action decreases along Floer strips, we conclude that 

$$\mathcal{A}_{H^{\sharp \tau}}(u_s) \geq \mathcal{A}_{H^{\sharp \tau}} (u_{\infty}) = \tau A_h(r^-)$$
and inequality \eqref{eq:variation of r(u_s) is bounded by above} follows. \\

\textbf{Step 2:} From now on, we assume that the image of $u$ is contained in a region where $h''' \geq 0$. On this step we want to show that 
\begin{equation} \label{eq: step 2}
    \frac{A_h(r^-)}{A_h(r^+)}\tau r^+ \leq \inf_{s\in \mathbb{R}} \int_0^{\tau} r(u_s)dt \leq \tau r^+.
\end{equation}
\begin{remark} \label{rmk:rmk inside constraint lemma}
    Note that $E(u) = \tau (A_h(r^+) - A_h(r^-))$, and that the lower bound on \eqref{eq: step 2} can be written as $\tau r^+ - E(u)r^+/ A_h(r^+)$.
\end{remark}

Notice that the last inequality follows from the maximum principal \eqref{eq: maximum principle}. Now, for the first inequality we use the following claim: \\ \\
\textit{Claim 1:} The function $\frac{A_h(r)}{r}$ is non decreasing in $[1,r_0]$, provided that $h''' \geq 0$ on this interval. \\

Assuming the claim, let us prove the first inequality in \eqref{eq: step 2}, i.e.,

\begin{equation} \label{eq: small ineq on step 2}
    \frac{A_h(r^-)}{A_h(r^+)}\tau r^+ \leq  \int_0^{\tau} r(u_s)dt, \text{ for all } s\in [-\infty,\infty].
\end{equation}
First, note that from the claim and that $A_h(r^-) \leq A_h(r^+)$, follows that
$$\frac{A_h(r^-)}{A_h(r^+)} \tau r^+ \leq \tau r^-,$$
and 
$$\frac{A_h(r^-)}{A_h(r^+)} \tau r^+ \leq \tau r^+,$$
respectively. Hence, inequality \eqref{eq: small ineq on step 2} holds for $s = \pm \infty$. To conclude \eqref{eq: small ineq on step 2} for all $s \in [-\infty,\infty]$, it is enough to check that the inequality hold on all the critical points of the right hand side, i.e., on all the critical points of the function
$$s \rightarrow \int_0^{\tau} r(u_s)dt.$$
Thus, let $s_0$ be a critical point. From \eqref{eq:variation of r(u_s) is bounded by above}, the maximum principal \eqref{eq: maximum principle} and the claim above it follows that 
$$\tau A_h(r^-) \leq \int_0^{\tau} A_h(r(u_{s_0})) \leq \frac{A_h(r^+)}{r^+} \int_0^{\tau}r(u_{s_0}).$$
This completes the proof of \eqref{eq: small ineq on step 2} assuming the claim. 
\begin{proof}[Proof of Claim 1] 
    Recall that $A_h = rh' - h$. Dividing by $r$ and differentiating we obtain 
    $$\frac{d}{dr} \frac{A_h(r)}{r} = \frac{rA_h' - A_h}{r^2} = \frac{r^2h'' - rh' +h}{r^2}.$$
    Thus, it is enough to check that $f = r^2 h'' -rh' + h \geq 0$. Since $h''' \geq 0$, then $h''$ in monotone increasing. Now notice that 
    $$h'(r) - h'(1) = \int_1^r h''(h)dh \leq \int_1^r h''(r)dh \leq (r-1)h''(r),$$
    and since $h'(1)=0$, we obtain 
    $$h'(r) \leq (r-1)h''(r).$$
    Therefore,
    $$f = r(rh'' - h') + h \geq r(rh'' - (r-1)h'') + h \geq rh'' + h \geq 0.$$
    With this argument, we conclude that $\frac{A_h (r)}{r}$ is strictly increasing unless $h \equiv 0$, which is ruled out on this setting by our choice of Hamiltonian ($H$ is in particular strictly convex).
    \end{proof}
    \textbf{Step 3:} For $s\in \mathbb{R}$ and $\rho >0$, we consider $\mu(s,\rho)$ the total amount of time that the slice $u_s$ spends under the level $r^+ - \rho$, i.e.,
    $$\mu(s,\rho) = \text{Leb} \{t \mid r(u(s,t)) \leq r^+ - \rho \}.$$
    On this step we want to prove that
    \begin{equation} \label{eq: eq of step 3}
        \mu(s,\rho) \rho \leq \frac{r^+}{A_h(r^+)}E(u).
    \end{equation}
    From the maximum principal \eqref{eq: maximum principle}, we have that
    \begin{equation} \label{eq: step 3 eq 1}
        \int_0^{\tau} r(u_s)dt \leq (\tau - \mu(s,\rho))r^+ + \mu(s,\rho)(r^+ - \rho) = \tau r^+ - \mu(s,\rho)\rho.
    \end{equation}
    Now, by combining \eqref{eq: step 2}, Remark \ref{rmk:rmk inside constraint lemma} and \eqref{eq: step 3 eq 1}, we obtain 
    $$\tau r^+ - \frac{r^+}{A_h(r^+)} E(u) \leq \tau r^+ - \mu(s,\rho)\rho,$$
    and \eqref{eq: eq of step 3} follows. \\

    \textbf{Step 4:} In this step we finish the proof of Theorem \ref{thm: lower bound for the r coordinate of a curve}. We consider $\epsilon>0$ and $C'>0$ be as in the Remark \ref{rmk: partial s u bounded by energy}, and assume that $E(u)< \epsilon$. If $r^- \leq \inf_{\mathbb{R} \times [0,\tau]} r(u)$, then there is nothing to show. If $\inf_{\mathbb{R}\times [0,\tau]} r(u) < r^-$, then fix $0< \eta < r^- - \inf_{\mathbb{R}\times [0,\tau]} r(u)$. Choose $(s_0,t_0) \in \mathbb{R} \times [0,\tau]$ such that $r(u(s_0,t_0))< r^- - \eta$. Notice that by the Bourgeios-Oancea monotonicity property \eqref{eq: Bourgeois-Oancea}, the slice $u_{s_0}$ has to rise at least to the level $r^-$. Now we consider the following claim: \\ \\
    \textit{Claim 2:} For any $t_1 \in [0,\tau]$, with $r(u(s_0,t_1)) \geq  r^- - \eta/2$, we have that 
    $$\eta/2 \leq C'\sqrt{r^+} E(u)^{1/4}|t_1-t_0|.$$
    
    Let us finish the proof of Theorem \ref{thm: lower bound for the r coordinate of a curve} assuming the claim. Consider $\rho = r^+ - r^- + \eta/2$. From the claim and the Bourgeois-Oancea monotonicity \eqref{eq: Bourgeois-Oancea}, we see that
    \begin{equation} \label{eq: eq step 4}
        \eta/2 \leq C' \sqrt{r^+}E(u)^{1/4} \mu(s_0,\rho).
    \end{equation}
    By combining \eqref{eq: eq step 4} and \eqref{eq: eq of step 3}, we obtain
    $$\eta^2/4 \leq (r^+ - r^-)\eta/2 + \eta^2/4 \leq \frac{C'(r^+)^{3/2}}{A_h(r^+)}E(u)^{5/4}.$$    
    Now, by taking the square root we get
    \begin{equation} \label{eq: step 3 second eq}
        \eta \leq \frac{2\sqrt{C'}(r^+)^{3/4}}{\sqrt{A_h(r^+)}}E(u)^{5/8}.
    \end{equation}
    Since \eqref{eq: step 3 second eq} holds for all $\eta<r^- - \inf_{\mathbb{R}\times [0,\tau]}r(u)$, then it holds for $\eta = r^- - \inf_{\mathbb{R}\times [0,\tau]}r(u)$. By setting $C = 2\sqrt{C'}$, we conclude the proof of the Theorem.

    \begin{proof}[Proof of Claim 2]
        We have that 
        \begin{equation} \label{eq: claim 2}
            \eta/2 < r(u(s_0,t_1)) - r(u(s_0,t_0)) = \int_{t_0}^{t_1} dr(\partial_t u(s_0,t))dt.
        \end{equation}
    By using the identity
    $$ dr(\partial_t u) = r(u) \alpha(\partial_s u) + r(u)\alpha (J X_H(u)) = r(u)\alpha(\partial_s u),$$
    which follows from \eqref{eq: relation between dr and alpha}, the Floer equation \eqref{eq: Floer equation with boundary constraint}, and the condition that $\alpha(JX_H) = 0$, the inequality \eqref{eq: claim 2} becomes
    \begin{align*}
         \eta/2 & \leq \int_{t_0}^{t_1}r(\partial_s u(s_0,t))\alpha(\partial_s u(s_0,t))dt \\
         & \leq \int_{t_0}^{t_1} r(u(s_0,t)) \frac{||\partial_s u(s_0,t)||}{||R_{\alpha}(u(s_0,t))||}dt \\
         & \leq \int_{t_0}^{t_1} \sqrt{r(u(s_0,t))}||\partial_s u(s_0, t)||dt \\
         & \leq \sqrt{r^+} \max_{t_0\leq t \leq t_1}||\partial_s u(s_0,t)|| |t_1 - t_0| \\
         & \leq C\sqrt{r^+} E(u)^{1/4}|t_1 - t_0|,
    \end{align*}
        where $R_{\alpha}$ is the Reeb vector field of $\alpha$. In the third inequality, we used the identify 
        $$||R_{\alpha}||^2 = \omega(R_{\alpha},JR_{\alpha}) = r,$$
        in the fourth we used the maximum principle \eqref{eq: maximum principle}, and the last is a consequence of Remark \ref{rmk: partial s u bounded by energy}. 
    \end{proof}
\end{proof}

\begin{proof}[Proof of Bourgeois-Oancea monotonicity \eqref{eq: Bourgeois-Oancea}] By contradiction, we assume that there exists $s_0 \in \mathbb{R}$ such that 
\begin{equation} \label{eq: bougeouis-oancea monotonicity contradiction hyp}
    \max_{t\in[0,\tau]}r(u(s_0,t) < r^-.
\end{equation}
Then it follows from the maximal principal, \eqref{eq: maximum principle}, that 
$$\max_{t\in [0,\tau]} r(u(s,t)) \leq r^-,$$
for all $s\in [s_0,\infty)$. Now from \eqref{eq:variation of r(u_s) is bounded by above} and the fact that $A'_h(r)\geq 0$, we obtain
$$\frac{d}{ds} \int_0^{\tau} r(u_s) dt \leq -\tau A_h(r^-) + \int_0^{\tau} A_h(r(u_s))dt \leq 0,$$
for all $s\in [s_0,\infty)$. Now, from assumption \eqref{eq: bougeouis-oancea monotonicity contradiction hyp}, we have 
$$\int_0^{\tau}r(u(s_0,t)dt) \leq \tau r^- - \epsilon,$$
for some $\epsilon>0$. Then the same holds for $s \in [s_0,\infty)$, and this is a contradiction with the fact that the left hand side converges to $\tau r^-$, which completes the proof of \eqref{eq: Bourgeois-Oancea} 
\end{proof}

\bibliographystyle{alpha}\bibliography{mybibliography}
\end{document}